\newcommand{\driverOption}{}
  \renewcommand{\driverOption}{pdftex}
  \renewcommand{\driverOption}{dvips}
\newcommand{\hyperrefDriverOption}{}
	\renewcommand{\hyperrefDriverOption}{pdftex}
	\renewcommand{\hyperrefDriverOption}{hypertex}
	\newcommand{\TM}[1]{\marginpar{\parbox{6cm}{{\small {\bf TM:} #1}}}}
	\newcommand{\UP}[1]{\marginpar{\parbox{6cm}{{\small {\bf UP:} #1}}}}
	\newcommand{\TM}[1]{}
	\newcommand{\UP}[1]{}
\newtheorem{theorem}{Theorem}
\newtheorem{lemma}[theorem]{Lemma}
\newtheorem{proposition}[theorem]{Proposition}
\theoremstyle{definition}
\newtheorem{definition}[theorem]{Definition}
\theoremstyle{remark}
\long\def\symbolfootnote[#1]#2{\begingroup
\def\thefootnote{\fnsymbol{footnote}}\footnote[#1]{#2}\endgroup}
\begin{document}

\begin{center}

\LARGE On globally sparse Ramsey graphs
\vspace{2mm}

\Large Torsten Mütze \quad Ueli Peter
\vspace{2mm}

\large
  Institute of Theoretical Computer Science \\
  ETH Zürich, 8092 Zürich, Switzerland \\
  \{{\small\tt muetzet|upeter}\}{\small\tt @inf.ethz.ch}
\vspace{5mm}

\small

\begin{minipage}{0.8\linewidth}
\textsc{Abstract.}
We say that a graph $G$ has the Ramsey property w.r.t.\ some graph $F$ and some integer $r\geq 2$, or $G$ is $(F,r)$-Ramsey for short, if any $r$-coloring of the edges of $G$ contains a monochromatic copy of $F$. R{\"o}dl and Ruci{\'n}ski asked how globally sparse $(F,r)$-Ramsey graphs $G$ can possibly be, where the density of $G$ is measured by the subgraph $H\seq G$ with the highest average degree. So far, this so-called Ramsey density is known only for cliques and some trivial graphs $F$. In this work we determine the Ramsey density up to some small error terms for several cases when $F$ is a complete bipartite graph, a cycle or a path, and $r\geq 2$ colors are available.
\end{minipage}

\end{center}

\vspace{5mm}

\section{Introduction}

Ramsey's celebrated theorem~\cite{Ramsey01011930} states that for any integers $r$ and $\ell$, \emph{any} $r$-coloring of the edges of a large enough complete graph contains a monochromatic clique on $\ell$ vertices, i.e., a clique whose edges all receive the same color. In this context we say that a graph $G$ \emph{has the Ramsey property w.r.t.\ some graph $F$ and some integer $r\geq 2$}, or $G$ is \emph{$(F,r)$-Ramsey} for short, if any $r$-coloring of the edges of $G$ contains a monochromatic copy of $F$.
While Ramsey's theorem seems to rely on the fact that a large complete graph is very dense, Folkman~\cite{MR0268080} proved that there are graphs that are Ramsey with respect to $K_\ell$ and $r=2$ colors which do not contain a $K_{\ell+1}$ as a subgraph. This result was later generalized by Ne{\v{s}}et{\v{r}}il and R{\"o}dl~\cite{MR0412004} to the case of more than 2~colors. The smallest currently known graph that is $(K_3,2)$-Ramsey and $K_4$-free has 941 vertices \cite{MR2410116}.

Not allowing a $K_{\ell+1}$-subgraph is an entirely \emph{local} density restriction and still allows for graphs that are very dense \emph{globally}, in the sense that they contain many edges. Motivated by this fact, R{\"o}dl and Ruci{\'n}ski~\cite{MR1249720} asked how globally sparse Ramsey graphs can possibly be. They introduced the \emph{Ramsey density of $F$ and $r$}, defined as
\begin{equation} \label{eq:Ramsey-density}
  m^*(F,r):=\inf\{ m(G) \mid \text{$G$ is $(F,r)$-Ramsey} \} \enspace,
\end{equation}
where
\begin{equation} \label{eq:m-density}
  m(G):=\max_{H\seq G} \frac{e(H)}{v(H)} \enspace,
\end{equation}
and $e(H)$ and $v(H)$ denote the number of edges and vertices of $H$, respectively. The parameter $m(G)$ measures the global density of $G$; it is equal to half the average degree of $H$, maximized over all subgraphs $H\seq G$. This density parameter and variations of it arise naturally in the theory of random graphs \cite{MR1782847, MR1864966}, and also in Nash-Williams' theorem for the arboricity of a graph \cite{MR0161333} (this theorem actually plays a crucial role in our lower bound proofs later on; see also the remarks at the end of this paper).

Kurek and Ruci\'{n}ski~\cite{MR2152058} proved the somewhat surprising fact that the sparsest graph that is $(K_\ell,r)$-Ramsey (in the sense of \eqref{eq:Ramsey-density}), is a large complete graph on as many vertices as the Ramsey number $R(K_\ell,r)$ tells us; recall that the \emph{Ramsey number $R(F,r)$ of $F$ and $r$} is defined as the minimal $N=N(F,r)$ such that $K_N$ is $(F,r)$-Ramsey. Their result shows that the Ramsey density of cliques is
\begin{equation} \label{eq:mstar-cliques}
  m^*(K_\ell,r)=m(K_{R(K_\ell,r)})=\frac{R(K_\ell,r)-1}{2} \enspace.
\end{equation}
Apart from cliques, the only graphs for which the Ramsey density is known exactly are the trivial cases of stars $S_\ell$ with $\ell$ rays and $r\geq 2$ colors and the path~$P_3$ on 3~edges and $r=2$ colors: For stars any easy pigeonholing argument shows that
\begin{equation} \label{eq:mstar-stars}
  m^*(S_\ell,r)=m(S_{r(\ell-1)+1})=\frac{r(\ell-1)+1}{r(\ell-1)+2} \enspace.
\end{equation}
For $P_3$ we have $m^*(P_3,2)=1$, which is also not hard to see (for the upper bound proof consider the 5-cycle with an additional dangling edge attached to every vertex).
\TM{Rucinski behauptet in seinem Paper, dass $m^*(P_3,2)=1$ irgendwie aus dem Resultat zur star arboricity folgt. Aber ich sehe nicht, wie das geht.}

Also for an analogous parameter defined for \emph{vertex}-colorings, the so-called \emph{vertex-Ramsey density} introduced in~\cite{MR1249720} and further studied in \cite{MR1248487}, relatively little is known (even though one might suspect that vertex-colorings are much easier to deal with than edge-colorings). The authors of \cite{MR1248487} offered a prize money of 400,000 z{\l}oty (Polish currency in 1993) for the exact determination of the vertex-Ramsey density for the case where the forbidden graph~$F$ is the path on 3~vertices and $r=2$ colors are available.

\subsection{Our results}

In this work we determine the Ramsey density $m^*(F,r)$ up to some small error terms for several cases when $F$ is a complete bipartite graph, a cycle or a path, and $r\geq 2$ colors are available.

\subsubsection*{Complete bipartite graphs}

The first theorem summarizes our results for the case where $F$ is a complete bipartite graph $K_{a,b}$, $a\leq b$. In~\cite{kurek-thesis} a general upper bound of $m^*(K_{a,b},r)<r(a-1)+1$ has been derived. We are able to prove an almost matching lower bound for the case where $b$ is somewhat larger than $a$.

\begin{theorem}[Complete bipartite graphs]
\label{thm:bicliques}
For any integers $a\geq 2$, $b\geq (a-1)^2+1$ and $r\geq 2$ we have
\begin{equation}
\label{eq:mstar-bicliques}
  r(a-1)-\eps \leq m^*(K_{a,b},r) < r(a-1)+1 \enspace,
\end{equation}
where $\eps=\eps(a,b,r):=\frac{r(a-1)-1}{\max\{R(K_{a,b},r),2r(a-1)+1\}}<1/2$.
\end{theorem}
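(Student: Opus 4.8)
I only need to establish the lower bound $m^*(K_{a,b},r)\ge r(a-1)-\eps$, since the upper bound is the one taken from \cite{kurek-thesis}. By contraposition it suffices to show: \emph{if $G$ is a graph with $m(G)<r(a-1)-\eps$, then $E(G)$ admits an $r$-coloring with no monochromatic $K_{a,b}$}, so that $G$ is not $(K_{a,b},r)$-Ramsey. Throughout I use freely that $m(G)<r(a-1)-\eps$ means $e(H)<(r(a-1)-\eps)v(H)$ for \emph{every} subgraph $H\subseteq G$, and that any graph on fewer than $R(K_{a,b},r)$ vertices is \emph{not} $(K_{a,b},r)$-Ramsey, hence carries a good $r$-coloring.

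The combinatorial core is a counting fact that is exactly calibrated to the hypothesis $b\ge(a-1)^2+1$: if a graph $J$ is the edge-disjoint union of $a-1$ forests, then $J$ is $K_{a,b}$-free, because on any $a+b$ of its vertices $J$ has at most $(a-1)(a+b-1)$ edges, while $ab-(a-1)(a+b-1)=b-(a-1)^2\ge 1$. Consequently it is enough to partition $E(G)$ into $r$ classes each of arboricity at most $a-1$, and this in turn follows once we know $\mathrm{arb}(G)\le r(a-1)$: by Nash--Williams' theorem we may then write $E(G)$ as a union of $r(a-1)$ forests, and distributing these into $r$ blocks of $a-1$ forests each gives, one block per color, the desired $r$-coloring.

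So the real task is to prove $\mathrm{arb}(G)\le r(a-1)$, i.e.\ by Nash--Williams that $e(H)\le r(a-1)(v(H)-1)$ for every $H\subseteq G$. This is immediate in two ranges. If $v(H)\le 2r(a-1)$ then $e(H)\le\binom{v(H)}{2}\le r(a-1)(v(H)-1)$. If $v(H)\ge M:=\max\{R(K_{a,b},r),\,2r(a-1)+1\}$, then since $\eps M=r(a-1)-1$ the density bound gives $e(H)<r(a-1)v(H)-\eps M=r(a-1)(v(H)-1)+1$, hence $e(H)\le r(a-1)(v(H)-1)$ by integrality. When $R(K_{a,b},r)\le 2r(a-1)+1$ these two ranges already exhaust all subgraphs and we are finished. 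The remaining possibility is $R(K_{a,b},r)>2r(a-1)+1$, in which case a Nash--Williams violator $H$ (one with $e(H)>r(a-1)(v(H)-1)$) must satisfy $2r(a-1)<v(H)<R(K_{a,b},r)$; in particular such $H$ lies on fewer than $R(K_{a,b},r)$ vertices and hence is itself not Ramsey. The plan here is an uncrossing argument on the family of violators — if two of them share at least two vertices, their union is again a violator — to confine all the excess density to a single vertex set $W$ with $|W|<R(K_{a,b},r)$, so that $G-E(G[W])$ has arboricity at most $r(a-1)$; one then colors $G-E(G[W])$ by the forest-grouping scheme, colors $G[W]$ by a good $r$-coloring, and checks that the two fit together.

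The step I expect to be the main obstacle is precisely this fusion: even after confining the dense part to one small set $W$, a monochromatic $K_{a,b}$ could use both edges inside $W$ and edges leaving $W$, and such a copy is ruled out by neither of the two partial colorings on its own. Arranging the choice of $W$, of the good coloring on $G[W]$, and of the forest decomposition outside $W$ so that the color classes of the combined coloring stay $K_{a,b}$-free is where the quantity $\max\{R(K_{a,b},r),2r(a-1)+1\}$ in the definition of $\eps$ is forced on us, and it is the part that needs genuine work. A secondary nuisance is the tight end $b=(a-1)^2+1$ of the range, where a union of $a-1$ forests together with a single additional edge can already contain a $K_{a,b}$, so the one ``leftover'' edge produced by a minimal violator must be placed with extra care.
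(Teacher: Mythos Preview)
Your overall strategy---reduce to Nash--Williams and group $r(a-1)$ forests into $r$ blocks of $a-1$---matches the paper's, and your edge count showing that a union of $a-1$ forests on $a+b$ vertices has at most $(a-1)(a+b-1)<ab$ edges (so is $K_{a,b}$-free when $b\ge(a-1)^2+1$) is correct and pleasingly short. The handling of the ``violators'', however, has two genuine gaps.

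First, confining the excess density to a \emph{single} set $W$ with $|W|<R(K_{a,b},r)$ is not achievable in general. Two maximal violators that are vertex-disjoint (or share a single vertex) do not merge under your uncrossing rule, and nothing in the hypothesis $m(G)<r(a-1)-\eps$ prevents there being many pairwise disjoint violators whose union has $\ge R(K_{a,b},r)$ vertices. The paper confronts exactly this and produces, via an iterated contraction argument (its Contraction Lemma), a \emph{family} $\mathcal{H}$ of vertex-disjoint subgraphs, each on fewer than $R(K_{a,b},r)$ vertices.

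Second---and this is the key idea you are missing for the fusion step you correctly flag---the paper does not delete the edges inside these dense pieces; it \emph{contracts} every $H\in\mathcal{H}$ to a point, obtaining a multigraph $\bar G$ with $m_1(\bar G)\le r(a-1)$, and only then applies Nash--Williams. The resulting $a-1$ forests per color live in $\bar G$, not in $G$: every edge leaving some $H$ is, in $\bar G$, incident to the single contracted vertex representing $H$. This is precisely what makes the combined coloring (a good $r$-coloring on each small $H$, plus the forest blocks on the rest) $K_{a,b}$-free. The paper's Lemma establishing this builds an auxiliary colored multigraph on the $a$-side of a hypothetical monochromatic $K_{a,b}$ and pigeonholes a monochromatic cycle there, contradicting acyclicity of the $F_i$ \emph{in $\bar G$}; it does not reduce to your edge count, because one color class is not simply a union of $a-1$ forests in $G$ once the inside-$H$ edges are added back. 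With deletion instead of contraction you cannot even guarantee that $G-E(G[W])$ has arboricity $\le r(a-1)$, and even if you could, the cross-$W$ copies you worry about would still not be controlled. So the missing device is: contract the small dense pieces, decompose the quotient, and prove the fusion lemma for forests in the quotient.
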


From the best known general lower bound $R(K_{a,b},r)\geq (2\pi\sqrt{ab})^{\frac{1}{a+b}}\big(\frac{a+b}{e^2}\big) r^{\frac{ab-1}{a+b}}$ from~\cite{MR0360329}, it follows that $\eps$ tends to $0$ for larger values of $a$, $b$ and/or $r$. See~\cite{MR0360329, radziszowski} for better lower bounds on $R(K_{a,b},r)$ in special cases that can be plugged into the lower bound in~\eqref{eq:mstar-bicliques}; see also the remarks at the end of this paper.

We note here that the upper bound for complete bipartite graphs $K_{a,b}$ stated in Theorem~\ref{thm:bicliques} (which holds for arbitrary values of $a$ and $b$) can be slightly improved; see the remarks at the end of this paper.

\subsubsection*{Cycles}

The next theorem summarizes our results for the case where $F$ is a cycle $C_\ell$. The upper bound for even cycles and the lower bound for odd cycles follow from results presented in~\cite{MR1249720}. For even cycles we are able to prove an almost matching lower bound.

\begin{theorem}[Cycles]
\label{thm:cycles}
For any even $\ell\geq 4$ and any integer $r\geq 2$ we have
\begin{equation} \label{eq:mstar-even-cycles}
  r-\eps \leq m^*(C_\ell,r) < r+1 \enspace,
\end{equation}
where $\eps=\eps(\ell,r):=\frac{r-1}{\max\{R(C_\ell,r),2r+1\}}<1/2$. \\
There is a function $f()$ such that for any odd $\ell\geq 3$ and any integer $r\geq 2$ we have
\begin{equation} \label{eq:mstar-odd-cycles}
  2^{r-1} \leq m^*(C_\ell,r) \leq f(r) \enspace.
\end{equation}
\end{theorem}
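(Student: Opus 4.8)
The plan is to prove the two halves of Theorem~\ref{thm:cycles} by rather different methods. As the statement indicates, the upper bound $m^*(C_\ell,r)<r+1$ for even $\ell$ and the lower bound $m^*(C_\ell,r)\ge 2^{r-1}$ for odd $\ell$ follow from the constructions and arguments in~\cite{MR1249720}, so the real work lies in the lower bound for even $\ell$ and the upper bound $f(r)$ for odd $\ell$. Throughout put $M:=\max\{R(C_\ell,r),2r+1\}$, so that $r-\eps=r-(r-1)/M$.

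\emph{Lower bound for even cycles.} It suffices to show that any $G$ with $m(G)<r-\eps$ is not $(C_\ell,r)$-Ramsey, i.e.\ that $E(G)$ admits an $r$-coloring with no monochromatic $C_\ell$. The whole argument runs through Nash--Williams' arboricity theorem. First, if the arboricity of $G$ is at most $r$, we partition $E(G)$ into $r$ forests and use one color per forest; this coloring has no monochromatic cycle at all, so we are done. Second, if the arboricity exceeds $r$, Nash--Williams produces a subgraph $H\seq G$ with $e(H)\ge r(v(H)-1)+1$, whence $m(G)\ge e(H)/v(H)\ge r-(r-1)/v(H)$; comparing with $m(G)<r-(r-1)/M$ forces $v(H)<M\le R(C_\ell,r)$, so that $H$ embeds in $K_{R(C_\ell,r)-1}$ and is therefore \emph{not} $(C_\ell,r)$-Ramsey. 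The idea is then to iterate: peel off a maximal packing of vertex-disjoint ``dense pieces'' of this kind (each on fewer than $M$ vertices, hence each non-Ramsey), note that deleting their vertices leaves a graph of arboricity at most $r$, color that residual graph by $r$ forests, and finally extend the coloring across the dense pieces and the edges incident to them. Carrying out this last extension is the crux: one must color the (boundedly many) edges inside and at each dense piece without completing a monochromatic $C_\ell$, processing the pieces in a suitable order and exploiting that every copy of $C_\ell$ uses exactly two edges at each of its vertices (so at a vertex of degree $\le r$ one may simply give all incident edges distinct colors). It is exactly here that the value of $\eps$, and with it $R(C_\ell,r)$, is needed; I expect this bookkeeping to be the main technical obstacle.

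\emph{Upper bound for odd cycles.} Here one must construct, for every odd $\ell\ge 3$ and every $r\ge 2$, a graph $G=G(\ell,r)$ that is $(C_\ell,r)$-Ramsey with $m(G)\le f(r)$, where $f$ must not depend on $\ell$. The point of departure is the observation behind the lower bound $2^{r-1}$: an $r$-coloring of $E(G)$ in which every color class is bipartite exists precisely when $\chi(G)\le 2^r$, so any graph with $\chi(G)\ge 2^r+1$ contains, in every $r$-coloring, a monochromatic odd cycle---but of an a priori uncontrolled length. The plan is to assemble $G$ out of a ``chromatic core'' of chromatic number at least $2^r+1$ and density depending only on $r$ (one may, for instance, start from a sparsest $(C_3,r)$-Ramsey graph) together with cycle-completion gadgets whose role is to convert an arbitrary monochromatic odd cycle into a monochromatic $C_\ell$, the whole thing arranged so that its density stays bounded in terms of $r$ alone. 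Designing these gadgets so that they perform their task against an adversarial coloring while themselves staying sparse is the crux of this half---and the main obstacle; the freedom to let $f(r)$ be any function of $r$ (rather than a sharp bound) is precisely the slack that lets the dependence on $\ell$ be eliminated.
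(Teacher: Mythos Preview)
Your plan for the even-cycle lower bound is on the right track (Nash--Williams plus ``the dense pieces are smaller than the Ramsey number''), but the step you flag as ``bookkeeping'' is in fact where the main idea is missing. Deleting the vertices of a maximal packing of dense pieces does leave a graph of arboricity at most $r$, as you say; the problem is the edges you threw away. These are not just the edges \emph{inside} the pieces (which you can $r$-color $C_\ell$-freely since each piece has fewer than $R(C_\ell,r)$ vertices), but also all edges \emph{between} a dense piece and the rest of $G$, and all edges between two different dense pieces. Those boundary edges can be arbitrarily many and can form arbitrary subgraphs; your remark about vertices of degree at most $r$ does not apply to them, and there is no evident greedy order in which to color them without closing a monochromatic $C_\ell$ through a dense piece. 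The paper resolves this not by deleting the dense pieces but by \emph{contracting} each of them to a single vertex. Then the boundary edges survive into the quotient multigraph $\Gbar$ and are colored, together with the rest, by the $r$ Nash--Williams forests of $\Gbar$. The verification is then one line: any monochromatic cycle in $G$ projects to a closed walk with distinct edges in a forest of $\Gbar$, which is impossible, so the cycle must lie entirely inside one dense piece---contradicting the choice of its internal coloring. One subtlety you would still have to face is that a single round of contraction may create new dense subgraphs in $\Gbar$; the paper handles this with an iterative contraction lemma and shows that all pieces ever contracted are already pairwise vertex-disjoint subgraphs of the original $G$, each on fewer than $R(C_\ell,r)$ vertices.

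For the odd-cycle upper bound, your ``chromatic core plus $C_\ell$-completion gadgets'' is a plausible heuristic but is not how the paper proceeds, and carrying it out would be a substantial project in its own right. The paper simply invokes the construction of Haxell, Kohayakawa and \L uczak: for every $r$ there are graphs $G=G(n,r)$ of maximum degree bounded by a function of $r$ alone such that every $r$-coloring of $E(G)$ contains, in some single color, cycles of \emph{every} length in an interval $[b(r)\log n,\,b'(r)n]$. Taking $n$ large enough so that this interval contains $\ell$ gives a $(C_\ell,r)$-Ramsey graph with $m(G)$ bounded by a function of $r$, and that is the $f(r)$ in the statement.
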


The dominant terms $r$ and $2^r$ in these bounds for even and odd cycles, respectively, are very similar to those known for the Ramsey number $R(C_\ell,r)$ (see \cite{MR0317991, MR1044995, JGT:JGT20572}). However, Theorem~\ref{thm:cycles} shows that, unlike the Ramsey number, the Ramsey density does not grow unbounded for fixed $r$ and $\ell\rightarrow\infty$.

Using the best known general lower bound $R(C_\ell,r)\geq (r-1)(\ell-2)+2$ from~\cite{MR2231999}, it follows that $\eps$ tends to $0$ for larger values of $\ell$ and/or $r$. See~\cite{MR2231999, radziszowski} for better lower bounds on $R(C_\ell,r)$ in special cases that can be plugged into the lower bound in~\eqref{eq:mstar-even-cycles}; see also the remarks at the end of this paper.

The existence of an upper bound for $m^*(C_\ell,r)$ which is independent of $\ell$, expressed by the function~$f(r)$ in~\eqref{eq:mstar-odd-cycles}, can be derived from the results in~\cite{MR1356576}. Unfortunately, due to the use of the regularity lemma those methods do not yield quantitative estimates for $f(r)$ that come close to the lower bound stated in \eqref{eq:mstar-odd-cycles}.
For fixed odd $\ell$ and $r\rightarrow \infty$, however, the trivial bound $m^*(C_\ell,r)\leq m(K_{R(C_\ell,r)})$ together with the bound $R(C_\ell,r)\leq (r+2)!\ell$ from \cite{MR0317991} shows that $m^*(C_\ell,r)\leq 2^{r\log_2(r)(1+o(1))}\ell$.

\subsubsection*{Paths}

We denote by $P_\ell$ the path on $\ell$ edges. Note that as $P_\ell$, $\ell\geq 3$, is a subgraph of $C_{2\ell}$, we have the upper bound
\begin{equation} \label{eq:Pell-Cell-bound}
  m^*(P_\ell,r)\leq m^*(C_{2\ell},r) \eqBy{eq:mstar-even-cycles} r+\Theta(1) \enspace.
\end{equation}
Our next theorem bounds away $m^*(P_\ell,r)$ from the Ramsey density of even cycles by showing that $m^*(P_\ell,r)\leq c\cdot r+\Theta(1)$ for some constant $c=c(\ell)$ that is strictly smaller than 1. For paths of length $\ell=3$ and $\ell=4$ we obtain almost matching bounds: $m^*(P_\ell,r)=\frac{1}{2}r+\Theta(1)$, i.e., here the truth is basically half the value on the right hand side of~\eqref{eq:Pell-Cell-bound}. (Note that paths of length $\ell=1$ and $\ell=2$ are already covered by \eqref{eq:mstar-stars}.)

\begin{theorem}[Paths]
\label{thm:paths}
For any integers $\ell\geq 3$ and $r\geq 2$ we have
\begin{equation} \label{eq:mstar-paths}
  \rhalf-\eps \leq m^*(P_\ell,r) < \left\lceil \big(1-{\textstyle \frac{1}{\ellhalf}}\big)\cdot r+{\textstyle \frac{1}{\ellhalf}} \right\rceil \enspace,
\end{equation}
where $\eps=\eps(\ell,r):=\frac{\rhalf-1}{\max\{R(P_\ellthird,r),2\rhalf+1\}}<1/2$.
\end{theorem}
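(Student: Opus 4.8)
The plan is to handle the upper and lower bounds separately, mirroring the structure used for the other two theorems in the paper. For the lower bound $m^*(P_\ell,r) \geq \rhalf - \eps$, the key observation is that any graph that is $(P_\ell, r)$-Ramsey must in particular be $(P_{\ellthird}, r')$-Ramsey for a suitable smaller path and fewer colors, or more directly, it must contain enough edges locally to force a monochromatic path of length $\ell$. I would invoke the Nash-Williams arboricity theorem (flagged in the excerpt as the engine of the lower-bound proofs): if $m(G) < k$, then $E(G)$ decomposes into fewer than $k$ forests, and one then needs to argue that if $G$ admits a decomposition into too few forests, one can color the edges of $G$ with $r$ colors so that each color class is $P_\ell$-free. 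The subtlety is that a single forest is not $P_\ell$-free, so one must be cleverer — presumably grouping the forests into $r$ color classes and using that a forest with small enough components, or a union of a bounded number of forests with a clever orientation/level argument, avoids long monochromatic paths. The $\max\{R(P_{\ellthird},r),2\rhalf+1\}$ appearing in the denominator of $\eps$ strongly suggests that the lower-bound construction branches into two regimes: a ``small'' Ramsey-number regime where one can use $K_{R(P_{\ellthird},r)}$-type counting, and a generic regime handled by the arboricity argument, and one takes whichever gives the better (larger) denominator.

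For the upper bound $m^*(P_\ell,r) < \lceil(1-1/\ellhalf)r + 1/\ellhalf\rceil$, I would construct an explicit graph $G$ that is $(P_\ell,r)$-Ramsey with $m(G)$ strictly below this integer. The appearance of $\ellhalf = \lceil \ell/2\rceil$ (or $\lfloor \ell/2 \rfloor$) indicates that the construction exploits the fact that a path of length $\ell$ has roughly $\ell/2$ edges on ``each side'' in some balanced sense, so one can afford to have each color class be a forest whose components are short paths of length less than $\ell$, and the number of edges per vertex saved is proportional to $1/\ellhalf$. Concretely, I expect the construction to take a suitable blow-up or product of short paths, or a union of $\approx (1-1/\ellhalf)r$ sparse pieces arranged so that any $r$-coloring is forced (by pigeonhole on the pieces, since $r$ colors cannot separately handle all the $(1-1/\ellhalf)r + \text{slack}$ copies of a gadget) to produce a long monochromatic path. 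The Ramsey-property verification would be a pigeonhole-plus-concatenation argument: in each gadget some color appears often enough to yield a path of length $\ell/\text{(number of gadgets)}$, and the gadgets are glued in series so that these short monochromatic subpaths of the majority color concatenate into a full $P_\ell$.

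For the endpoint cases $\ell = 3$ and $\ell = 4$, where the theorem claims $m^*(P_\ell,r) = \tfrac12 r + \Theta(1)$, I would check that the general upper-bound construction above, specialized to $\ellhalf = 2$, already gives $m^*(P_\ell,r) < \lceil \tfrac12 r + \tfrac12\rceil$, and that the general lower bound gives $\tfrac12 r - \eps$; these match up to the $\Theta(1)$ additive error, so no separate argument is needed beyond verifying that $\ellthird \le \ell$ makes $R(P_{\ellthird},r)$ a legitimate (finite, at least constant) quantity in the denominator.

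\medskip

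I expect the main obstacle to be the lower bound, specifically the step where one must show that a graph with $m(G)$ just below $\rhalf$ can be $r$-colored with every color class $P_\ell$-free. Turning a Nash–Williams forest decomposition into such a coloring is not automatic — a forest can contain arbitrarily long paths — so the real work is in an additional structural argument (an orientation with bounded out-degree, a potential/level function on vertices, or an Euler-tour-style splitting of each forest into short paths) that converts ``few forests'' into ``no long monochromatic path''. Getting the constants in this conversion to line up exactly with the claimed $\rhalf - \eps$, including correctly identifying which term wins in the $\max\{\cdot,\cdot\}$ defining $\eps$, is the delicate part; the upper-bound construction, by contrast, I expect to be a relatively routine (if ingenious) gadget-gluing argument.
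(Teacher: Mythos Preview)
Your high-level plan for the lower bound correctly singles out Nash--Williams as the engine and correctly flags the real difficulty: a forest can contain arbitrarily long paths, so ``$\rhalf$ forests'' does not by itself give an $r$-coloring with $P_\ell$-free classes. But you have not found the mechanism the paper uses, and none of your three candidate mechanisms (bounded out-degree orientation, level function, Euler-tour splitting into short paths) is it. The paper's argument has two ingredients you are missing. First, a \emph{contraction step}: before applying Nash--Williams one identifies a family $\cH$ of vertex-disjoint subgraphs, each on fewer than $R(P_{\ellthird},r)$ vertices, whose contraction drops the $m_1$-density of the whole graph to at most $\rhalf$; the edges inside each $H\in\cH$ are then $r$-colored directly with no monochromatic $P_{\ellthird}$ (this is exactly where $R(P_{\ellthird},r)$ enters, not via a two-regime branching as you conjectured --- the $\max$ in $\eps$ comes from a single short calculation, Lemma~\ref{lem:min-m-G}). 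Second, after contraction, Nash--Williams yields $\rhalf$ forests on the quotient, and each forest is split into \emph{two star forests}: the crucial point is that a path can use at most two edges of a star forest, and those two edges link at most three contracted blobs, each contributing a monochromatic subpath of length at most $\ellthird-1$. The arithmetic $2+3(\ellthird-1)<\ell$ is what pins down $\ellthird$.

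For the upper bound your intuition (``gadgets glued in series, pigeonhole concatenates short monochromatic subpaths'') is in the right spirit but substantially underestimates what is needed, and ``blow-up or product of short paths'' is not the construction. The paper builds $G^*$ by iterated gluing of the bipartite graphs $G(n,k,m)$ of \eqref{eq:G-n-k-m}, $\ellhalf$ layers deep, with $k=\big\lceil(1-\tfrac{1}{\ellhalf})r+\tfrac{1}{\ellhalf}\big\rceil$. The key lemma (Lemma~\ref{lem:star-subgraph}) is that any $P_\ell$-free $r$-coloring of a single $G(n,k,m)$ contains a complete bipartite subgraph with an \emph{$A$-centered star coloring} (each center in $A$ owns one color); one then records, over the $\ellhalf$ layers and over $k+1$ parallel branches, which $k$ of the $r$ colors show up at each layer in a $\{0,1\}^{\ellhalf\times r}$ matrix. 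The choice of $k$ forces, by counting zeros, some column among the first $k$ to be all ones in each matrix, and pigeonhole over the $k+1$ branches forces the same all-one column twice, producing two monochromatic length-$\ellhalf$ paths sharing a start vertex and hence a monochromatic $P_{2\ellhalf}$. This is an honest construction-plus-Ramsey argument, not a routine gadget check.
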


We remark that the upper bound stated in Theorem~\ref{thm:paths} for $P_3$ has already been found in~\cite{kurek-thesis}.
See~\cite{radziszowski} for the best known lower bounds on $R(P_\ellthird,r)$ that can be plugged into the lower bound in \eqref{eq:mstar-paths}.

\subsection{Organization of this paper}

In Section~\ref{sec:bounds} we collect a few general bounds for the Ramsey density $m^*(F,r)$. We then prove Theorem~\ref{thm:cycles} in Section~\ref{sec:cycles}. We defer the proof of Theorem~\ref{thm:bicliques} to Section~\ref{sec:proof-bicliques}, as it reuses some of the ideas from the proof of Theorem~\ref{thm:cycles} and is somewhat more technical. In Section~\ref{sec:paths} we present the proof of Theorem~\ref{thm:paths}. Some concluding remarks and open problems are presented in Section~\ref{sec:remarks}.

\section{Some useful general bounds}
\label{sec:bounds}

We first collect several lower bounds for the Ramsey density $m^*(F,r)$ in terms of various graph parameters of $F$: the chromatic number $\chi(F)$, the 2-density $m_2(F):=\max_{H\seq F: v(H)\geq 3} \frac{e(H)-1}{v(H)-2}$, the minimum degrees $\delta(H)$ of subgraphs $H\seq F$ and the clique number $\omega(F)$.

\begin{lemma}[General lower bounds]
\label{lemma:general-lb}
For any graph $F$ and any $r\geq 2$ we have
\begin{enumerate}
\item $m^*(F,r) \geq \frac{(\chi(F)-1)^r}{2}$ \hspace{0.7mm} \cite{MR1249720};
\item $m^*(F,r) \geq \frac{r}{2} m_2(F)$, if $m_2(F)\geq 3$, \hspace{0.7mm} \cite{MR1249720, kurek-thesis};
\item $m^*(F,r) \geq \frac{1}{2}\Big(r\big(\max\limits_{H\seq F}\delta(H)-1\big)+1\Big)$ \hspace{0.7mm};
\item $m^*(F,r) \geq m^*(K_{\omega(F)},r) \eqBy{eq:mstar-cliques} \frac{R(K_{\omega(F)},r)-1}{2}$ \hspace{0.7mm}.
\end{enumerate}
\end{lemma}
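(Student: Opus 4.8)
The plan is to establish each of the four lower bounds on $m^*(F,r)$ separately. The first two are cited, so I would only sketch them for completeness; the real work is items (3) and (4). The unifying principle is: to lower-bound $m^*(F,r)$ by some quantity $d$, it suffices to show that \emph{every} graph $G$ with $m(G)<d$ fails to be $(F,r)$-Ramsey, i.e., admits an $r$-coloring of its edges with no monochromatic copy of $F$. Since $m(G)<d$ controls the average degree of every subgraph of $G$, the natural tool is Nash-Williams' arboricity theorem: $m(G)\le k$ (or more precisely $\lceil m(G)\rceil \le k$, reading the density bound carefully) implies that $E(G)$ decomposes into $k$ forests. A forest decomposition is exactly what one wants, because forests are $F$-free whenever $F$ contains a cycle, and more generally each forest has a vertex of degree $\le 1$, which we can exploit greedily.

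\textbf{Item (3).} Set $\delta^*:=\max_{H\seq F}\delta(H)$. I want to show every $G$ with $m(G) < \tfrac12\big(r(\delta^*-1)+1\big)$ is not $(F,r)$-Ramsey. The inequality on $m(G)$ means $2\,e(H) < \big(r(\delta^*-1)+1\big) v(H)$ for all $H\seq G$, hence every subgraph $H$ has a vertex of degree at most $r(\delta^*-1)$; equivalently $G$ is $r(\delta^*-1)$-degenerate. Order the vertices $v_1,\dots,v_n$ so that each $v_i$ has at most $r(\delta^*-1)$ neighbors among $v_1,\dots,v_{i-1}$ (a degeneracy ordering). Now colour the back-edges of $v_i$ with $r$ colours so that each colour class gets at most $\delta^*-1$ of them; this is possible since $r(\delta^*-1)$ edges split into $r$ groups of size $\le \delta^*-1$. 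In the resulting colouring, in every colour $c$ each vertex has at most $\delta^*-1$ neighbours preceding it in the order, so the graph spanned by colour $c$ has a subgraph-minimum-degree bound: every subgraph contains a vertex of colour-$c$-degree $\le \delta^*-1$, i.e. the colour-$c$ graph has $\max_{H}\delta(H)\le\delta^*-1 < \delta^*$. Since any copy of $F$ inside the colour-$c$ graph would be a subgraph $H\seq F$ realised with $\delta(H)\ge\delta^*$ somewhere, no monochromatic $F$ exists. Hence $G$ is not $(F,r)$-Ramsey, giving the bound. The main thing to get right here is the passage from the density inequality to the degeneracy ordering and the bookkeeping that splitting $\le r(\delta^*-1)$ back-edges yields $\le \delta^*-1$ per colour.

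\textbf{Item (4).} This is a pure monotonicity statement: any copy of $K_{\omega(F)}$ contains (in fact equals, up to subgraph) a clique, and more to the point $K_{\omega(F)}\seq F$, so if $G$ is $(F,r)$-Ramsey then every $r$-colouring of $E(G)$ has a monochromatic $F$, which contains a monochromatic $K_{\omega(F)}$; thus $G$ is $(K_{\omega(F)},r)$-Ramsey. Taking the infimum of $m(G)$ over the (smaller) class of $(F,r)$-Ramsey graphs is therefore at least the infimum over all $(K_{\omega(F)},r)$-Ramsey graphs, i.e. $m^*(F,r)\ge m^*(K_{\omega(F)},r)$, and then~\eqref{eq:mstar-cliques} of Kurek–Ruciński supplies the closed form $\tfrac{R(K_{\omega(F)},r)-1}{2}$. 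For (1) and (2) I would just recall the idea: (1) uses that a graph of low density is degenerate, hence has bounded chromatic number, and one iterates a colouring argument $r$ times (the $(\chi(F)-1)^r$ is the threshold below which one can $r$-colour edges avoiding even a copy of $K_{\chi(F)}$, hence of $F$); (2) is the standard random-colouring / first-moment argument on the appearance of copies of $F$ when the edge density is below $\tfrac{r}{2}m_2(F)$, which is exactly the $2$-density threshold governing when copies of $F$ are "spread out" enough to be properly coloured — this is the Rödl–Ruciński-type bound, and the hypothesis $m_2(F)\ge 3$ ensures the estimate is non-trivial. The only genuine obstacle in the whole lemma is the careful argument in item~(3); the rest is either citation or a one-line monotonicity observation.
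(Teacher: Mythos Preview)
Your proposal is correct and follows essentially the same route as the paper. For item~(3) you use a degeneracy ordering and split the back-edges of each vertex into $r$ groups of size at most $\delta^*-1$, exactly as in the paper's proof; your final sentence is slightly garbled but the intended argument (a monochromatic copy of $F$ would contain a copy of the maximiser $H'$ with $\delta(H')=\delta^*$, contradicting $(\delta^*-1)$-degeneracy of the colour class) is the right one. For item~(4) you invoke the monotonicity $K_{\omega(F)}\subseteq F \Rightarrow m^*(K_{\omega(F)},r)\le m^*(F,r)$, which is precisely the paper's one-line observation; items~(1) and~(2) are cited in the paper and need no proof here, so your sketches are harmless. (Your opening remark about Nash--Williams being ``the natural tool'' is a red herring for this lemma---you correctly abandon it---but it plays no role in the actual argument.)
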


\TM{Frage zur ersten Lower Bound: Gibt es eine Partition der Kanten von $K_9$ in 3 Graphen, sodass etwaige ungerade Kreise in jedem dieser Graphen Länge mindestens 7 haben? Falls, ja, so erhalten wir sofort eine bessere Lower Bound für $m^*(C_5,3)$.}

For each of the four lower bounds stated in Lemma~\ref{lemma:general-lb}, it is easy to find graphs $F$ for which this bound outmatches the other given bounds.
We further note that in the second lower bound stated in Lemma~\ref{lemma:general-lb}, the precondition $m_2(F)\geq 3$ can be relaxed to $m_2(F)>1$ in the case $r=2$ \cite{MR1249720} (see also \cite{kurek-thesis}).

The next lemma states an upper bound for the parameter $m^*(F,r)$ for the case where $F$ is bipartite. This lemma generalizes and in certain cases improves the upper bounds derived in~\cite{MR1249720} and \cite{kurek-thesis} for bipartite graphs.

\begin{lemma}[Upper bound for bipartite graphs]
\label{lemma:bipartite-ub}
For any bipartite graph $F=(A\dcup B,E)$ and any $r\geq 2$ we have
\begin{equation*}
  m^*(F,r) < r\big(d(F)-1\big)+1 \enspace,
\end{equation*}
where
\begin{equation} \label{eq:dF}
  d(F):=\min\{\Delta_A(F), \Delta_B(F)\} \enspace,
\end{equation}
and $\Delta_A(F)$ and $\Delta_B(F)$ denote the maximum degree of all vertices in $A$ and $B$, respectively.
\end{lemma}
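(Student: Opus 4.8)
The plan is to construct, for each $r\ge 2$ and bipartite $F=(A\dcup B,E)$ with $d:=d(F)$, an explicit graph $G=G(F,r)$ that is $(F,r)$-Ramsey and satisfies $m(G)<r(d-1)+1$, which by the definition \eqref{eq:Ramsey-density} yields the claimed bound on $m^*(F,r)$. Without loss of generality assume $\Delta_A(F)=d$, so every vertex of $A$ has degree at most $d$ in $F$; the idea is that a monochromatic copy of $F$ only needs, on the $A$-side, vertices of degree $d$, and these are cheap to provide. Concretely, I would take $G$ to be a suitable blow-up of a complete bipartite "skeleton": let one side $U$ consist of a clique-free bipartite gadget of high enough multiplicity, and let the other side $W$ be a very large independent set, with $G$ essentially a complete bipartite graph $K_{n,N}$ for appropriate $n$ and $N\gg n$ — or, more carefully, a graph in which every vertex of $U$ has bounded degree but every vertex of $W$ has huge degree.

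The Ramsey argument is a pigeonhole/Ramsey-number computation: if $N$ is chosen large enough (as a function of $F$, $r$, and $n$) and $n$ is chosen so that each colour class, restricted to the neighbourhood structure around a fixed large star centre, is forced to contain the $B$-side of $F$ attached to a common set of $a:=|A|$ vertices each of degree $\ge d$ in that colour, then any $r$-colouring of $E(G)$ yields a monochromatic $F$. In more detail: fix any vertex $w\in W$; it has degree $\ge D$, so some colour appears on $\ge D/r$ of its edges; iterating this over the $|A|$ vertices we want to embed and using that $W$ is enormous compared to everything else, a standard application of Ramsey's theorem for the "$K_{a,b}$-type" embedding (as in the Kővári–Sós–Turán / Ramsey-for-bipartite-graphs style counting) produces $a$ vertices in $U$ and $b=|B|$ vertices in $W$ all joined in one colour, into which $F$ embeds because $\Delta_A(F)=d$. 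The precise value of $N$ needed is governed by $R(F,r)$ or a related bipartite Ramsey number, but since we only care about $m(G)$ and not about the size of $G$, we may take $N$ as large as we like.

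For the density bound, the key point is Nash-Williams' arboricity theorem (flagged in the introduction as central to the lower-bound proofs, and equally useful here): $m(G)<k$ iff $G$ decomposes into $k$ forests, or more conveniently one estimates $m(H)$ directly for every subgraph $H\subseteq G$. In the construction every edge has an endpoint in $U$, and each vertex of $U$ has degree at most $r(d-1)+1$ in $G$ by design (this is exactly where the "$-1$" and the "$+1$" come from: we give each $U$-vertex just barely enough degree that the pigeonhole forces $d-1$ edges of some colour plus one "anchor"). Hence for any $H\subseteq G$, summing degrees over $H\cap U$ gives $e(H)\le (r(d-1)+1)\,|H\cap U| \le (r(d-1)+1)\,v(H)$, and a short argument (the extremal $H$ cannot be all of the oversized independent side, and strictness is gained because $H$ must contain at least one vertex of positive degree on the $W$-side, or because the bound $e(H)\le(r(d-1)+1)|H\cap U|$ is never tight once $H$ contains any $W$-vertex) upgrades this to the strict inequality $m(G)<r(d-1)+1$.

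The main obstacle is getting the two requirements to coexist: the $U$-vertices must have degree small enough to force $m(G)<r(d-1)+1$, yet the whole graph must still be $(F,r)$-Ramsey. The resolution is the asymmetry between the two sides — we spend our degree budget only on the side of $F$ with the smaller maximum degree, namely $d(F)=\min\{\Delta_A(F),\Delta_B(F)\}$, and we let the other side absorb unbounded degree "for free" since those vertices can be duplicated into a huge independent set without affecting $m(G)$ beyond the $U$-contribution. Carrying out the pigeonholing so that the forced monochromatic biclique has the correct parts on the correct sides of $F$ (degree-$d$ vertices mapped to $A$, the rest to $B$) is the delicate bookkeeping step; everything else is routine counting.
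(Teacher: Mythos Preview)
Your density computation is essentially the paper's: if $G$ is bipartite with one side $M$ in which every vertex has degree at most $k$, then $e(H)\le k\,|V(H)\cap M|<k\,v(H)$ for every nonempty $H\subseteq G$, so $m(G)<k$. That part is fine, and it is exactly Lemma~\ref{lem:density-G-n-k-m}.

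The gap is the Ramsey step. Your plan is to locate a \emph{monochromatic} $K_{|A|,|B|}$ inside $G$ (``$a$ vertices in $U$ and $b=|B|$ vertices in $W$ all joined in one colour'') and then embed $F$ into it. But the density constraint forbids this: in a bipartite $G$ whose $M$-side has all degrees $\le k=r(d-1)+1$, any complete bipartite subgraph with $p$ vertices on the $M$-side satisfies $q\le k$, so $G$ contains no $K_{p,q}$ with both $p,q>k$. For $F=C_{2\ell}$ with $\ell>r+1$ we have $d(F)=2$, $k=r+1$, and $|A|=|B|=\ell>k$; your $G$ cannot even contain a single copy of $K_{\ell,\ell}$, monochromatic or not, so the biclique strategy is dead on arrival. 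The same obstruction kills the simpler $K_{n,N}$ suggestion, whichever side you declare to be $U$.

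The paper avoids this by using the graph $G(n,k,m)$ of \eqref{eq:G-n-k-m}: one side is $[n]$, the other has a vertex $(S,v)$ for each $S\in\binom{[n]}{k}$ and $v\in[m]$, joined to the $k$ elements of $S$. The key facts, both quoted from~\cite{MR2664330}, are that any bipartite $F$ with $d(F)=d$ embeds into some $G(n,d,m)$, and that $G(n',\,r(k-1)+1,\,m')$ is $(G(n,k,m),r)$-Ramsey for suitable $n',m'$. The latter is a genuine Ramsey lemma whose proof pigeonholes over the colour \emph{patterns} seen at the $M$-vertices (not a search for a monochromatic biclique); your ``fix a vertex $w\in W$ and pigeonhole its star'' does not supply this step. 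What you are missing is precisely a construction in which the low-degree side is rich enough to encode all $d$-element neighbourhoods of the $A$-side of $F$, together with a Ramsey argument tailored to that structure.
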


Observe that for bipartite graphs~$F$ that satisfy the condition $\max_{H\seq F}\delta(H)=d(F)$, such as $k$-regular or complete bipartite graphs, the lower and upper bounds given by the third part of Lemma~\ref{lemma:general-lb} and by Lemma~\ref{lemma:bipartite-ub}, respectively, differ only by a factor of~2.

Note that for families of bipartite graphs $F$ for which the parameter $d(F)$ is constant (e.g.\ even cycles $C_\ell$, complete bipartite graphs $K_{2,\ell}$, $K_{3,\ell}$ etc.), the bound on $m^*(F,r)$ stated in Lemma~\ref{lemma:bipartite-ub} is \emph{independent} of the size of $F$.

Observe also that for large values of $r$ the upper bound given by Lemma~\ref{lemma:bipartite-ub} for bipartite graphs $F$ is \emph{much smaller} than the lower bound given by the first part of Lemma~\ref{lemma:general-lb} for non-bipartite graphs. In other words, depending on whether $F$ is bipartite or not we observe a dichotomy of the growth of the Ramsey density $m^*(F,r)$ in the number~$r$ of colors, a phenomenon very similar to what can be observed for the ordinary Ramsey numbers $R(F,r)$ (cf.~\cite{MR2520279, MR1044995}).

\TM{In~\cite{MR2520279} wird eine Upper Bound von $R(F,r)\leq 32 \Delta r^\Delta v(F)$ für bipartite Graphen $F$ mit Maximalgrad $\leq \Delta$ bewiesen, d.h.\ die Bound hängt polynomiell von $r$ ab. Demgegenüber liefert schon die Lower Bound von Chv{\'a}tal und Harary $R(F,r)\geq (\chi(F)-1)^{r-1}(v(F)-1)$ \cite{MR0314696} für nicht-bipartite Graphen eine exponentielle Abhängigkeit von $r$. In \cite{MR1044995} werden speziell $R(C_3,r)$ und $R(C_4,r)$ diskutiert, aber ich habe die Referenz mit hineingenommen, weil dort ein bisschen Intuition vermittelt wird.}

This dichotomous behavior of the Ramsey density does not occur for the already mentioned vertex-Ramsey density introduced in~\cite{MR1249720}, the analogous quantity in the \emph{vertex-}coloring setting --- for this parameter general bounds which differ only by a factor of~2 and which are based only on degree conditions on $F$ have been proven in~\cite{MR1248487}.

\subsection{Proof of Lemma~\texorpdfstring{\ref{lemma:general-lb}}{4}}

We only need to prove the last two bounds stated in Lemma~\ref{lemma:general-lb}.

\begin{proof}[Proof of (3) in Lemma~\ref{lemma:general-lb}]
Observe that for any graph $G=(V,E)$ and any integer $k$, if $m(G)<k/2$, then there is a vertex $v\in V$ with $\deg(v)\leq k-1$.

To prove the claimed lower bound fix a subgraph $H'\in\argmax_{H\seq F}\delta(H)$ and let $G$ be a graph with $m(G)<\frac{1}{2}\big(r (\delta(H')-1)+1\big)$. By the above observation we can order the vertices of $G$ from $v_1,\ldots,v_n$ such that for every $i=1,\ldots,n$ the degree of $v_i$ in $G[\{v_1,\ldots,v_i\}]$, the graph induced by the vertices $v_1,\ldots,v_i$, is at most $r(\delta(H')-1)$. For each $i=1,\ldots,n$, we color all edges incident to $v_i$ in $G[\{v_1,\ldots,v_i\}]$ by using each of the $r$ colors at most $\delta(H')-1$ times. This clearly yields a coloring of $G$ without a monochromatic copy of $H'$ and therefore without a monochromatic copy of $F$.
\end{proof}

\begin{proof}[Proof of (4) in Lemma~\ref{lemma:general-lb}]
This bound follows trivially from the observation that $F\seq F'$ implies that $m^*(F,r)\leq m^*(F',r)$.
\end{proof}

\subsection{Proof of Lemma~\texorpdfstring{\ref{lemma:bipartite-ub}}{5}}

For the proof we use a construction from~\cite{MR2664330}. In that paper, the authors construct Ramsey graphs with small minimum degree. As a priori a small minimum degree does \emph{not} imply sparseness (\wrt the $m$-density), it is somewhat surprising that the same construction also yields sparse Ramsey graphs.

For integers $n\geq 1$, $1\leq k\leq n$ and $m\geq 1$ we define a bipartite graph $G=G(n,k,m)=(N\dcup M,E)$ with vertex partition
\begin{subequations} \label{eq:G-n-k-m}
\begin{equation}
  N:=[n] \enspace, \quad M:={\textstyle \binom{[n]}{k}} \times [m]
\end{equation}
and edge set
\begin{equation}
  E:=\big\{ \{u,(S,v)\} \bigmid[\big] u\in S\in {\textstyle \binom{[n]}{k}} \wedge v\in [m] \big\}
\end{equation}
\end{subequations}
(here $[n]$ denotes the set $\{1,\ldots,n\}$ and $\binom{[n]}{k}$ the set of all $k$-element subsets of $[n]$).
In words, we construct $G$ by taking the vertex set $[n]$ and adding for each of the $\binom{n}{k}$ possible choices of $k$ different vertices from $[n]$ exactly $m$ many vertices that connect exactly to those vertices in $[n]$.

\begin{lemma}[$G$ is sparse]
\label{lem:density-G-n-k-m}
The graph $G=G(n,k,m)$ defined in \eqref{eq:G-n-k-m} satisfies $m(G)<k$.
\end{lemma}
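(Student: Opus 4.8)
The plan is to exploit the fact that this construction is as sparse as it can possibly be given that every vertex of $M$ has degree exactly $k$. First I would record the basic structural observation: by the definition of the edge set in~\eqref{eq:G-n-k-m}, a vertex $(S,v)\in M$ is adjacent precisely to the $k$ vertices of $S\in\binom{[n]}{k}$, so $\deg_G((S,v))=k$. Consequently the degree of any such vertex in an arbitrary subgraph $H\seq G$ is at most $k$.

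Next I would take an arbitrary subgraph $H\seq G$ and write its vertex set as the disjoint union $N'\dcup M'$ with $N'\seq N$ and $M'\seq M$. Since $G$ is bipartite with parts $N$ and $M$, every edge of $H$ has exactly one endpoint in $M'$, and hence summing degrees over the side $M'$ gives
\[
  e(H)=\sum_{w\in M'}\deg_H(w)\leq k\,|M'| \enspace.
\]

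Finally I would distinguish two cases to extract the strict inequality. If $H$ has no edges, then $e(H)=0<k\cdot v(H)$ since $v(H)\geq 1$. Otherwise $H$ contains at least one edge, whose $N$-endpoint lies in $N'$, so $|N'|\geq 1$, and therefore
\[
  \frac{e(H)}{v(H)}\leq\frac{k\,|M'|}{|N'|+|M'|}<k \enspace.
\]
Taking the maximum over all $H\seq G$ then yields $m(G)<k$, as claimed.

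There is no real obstacle here; the argument is a short counting estimate based on the fixed degree $k$ on the $M$-side. The only point requiring a little care is the strictness of the final inequality, which is guaranteed by the observation that any subgraph achieving a positive density ratio must contain at least one vertex from $N$.
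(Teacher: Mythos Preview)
Your argument is correct and is essentially identical to the paper's own proof: both bound $e(H)$ by $k$ times the number of $M$-vertices in $H$ and then divide by $|N'|+|M'|$ to obtain the strict inequality. You are slightly more explicit about the degenerate case $e(H)=0$, but otherwise the two proofs coincide.
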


\begin{proof}
Note that as the degree of all vertices in the set $M$ is exactly $k$, we have for any two nonempty subsets $A\seq N$ and $B\seq M$ that
\begin{equation*}
  \frac{e(G[A\cup B])}{v(G[A\cup B])} \leq \frac{k|B|}{|A|+|B|} < k \enspace,
\end{equation*}
proving the claim.
\end{proof}

The following lemma was proved in~\cite{MR2664330} (Lemma~2.6 in that paper; the proof there is stated only for $r=2$ colors, but generalizes straightforwardly to the general case).

\begin{lemma}[$G$ is Ramsey \cite{MR2664330}]
\label{lem:bipartite-construction}
~
\begin{itemize}
\item Let $F=(A\dcup B,E)$ be a bipartite graph and let $d(F)$ be defined as in~\eqref{eq:dF}. There are integers $n=n(F)$ and $m=m(F)$ such that $F$ is a subgraph of the graph $G(n,d(F),m)$ defined in~\eqref{eq:G-n-k-m}.
\item For any integers $n\geq 1$, $1\leq k\leq n$, $m\geq 1$ and $r\geq 2$ there are integers $n'=n'(n,k,r)$ and $m'=m'(k,m,r)$ such that $G(n',r(k-1)+1,m')$ is $(G(n,k,m),r)$-Ramsey.
\end{itemize}
\end{lemma}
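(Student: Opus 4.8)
\medskip
\emph{(Sketch of a proof.)} The plan is to handle the two items separately: the first is a routine embedding, and essentially all of the work is in the second.

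For the first item I would assume without loss of generality that $d(F)=\Delta_A(F)=:k$ (otherwise exchange the names of $A$ and $B$), so that every vertex of $A$ has degree at most $k$ in $F$. Put $n:=\max\{|B|,k\}$ and $m:=|A|$, fix an injection of $B$ into $N=[n]$, and for each $a\in A$ extend the image of its neighborhood $N_F(a)$ (a set of at most $k$ vertices of $[n]$) to some $k$-set $S_a\subseteq[n]$. Enumerating $A=\{a_1,\dots,a_m\}$ and sending $a_i$ to the vertex $(S_{a_i},i)\in M$ should then be an embedding of $F$ into $G(n,k,m)$: any edge $\{a,b\}$ of $F$ with $b\in N_F(a)$ is mapped to a pair $\{(S_a,i),b'\}$ with $b'\in S_a$, which is an edge of $G(n,k,m)$ by construction.

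For the second item write $k':=r(k-1)+1$, let $G'=G(n',k',m')$ with $n',m'$ to be fixed at the end, and consider an arbitrary $r$-coloring $c$ of $E(G')$. The starting point is a pigeonhole on the $M$-side of $G'$: every $w=(S',v')\in M'$ has exactly $k'=r(k-1)+1$ neighbors, namely the elements of the $k'$-set $S'$, so $c$ induces a function $c_w\colon S'\to[r]$ and some color class $c_w^{-1}(t)$ has size at least $k$. What I would then aim to extract is a color $t$ and an $n$-set $N_0\subseteq N'$ such that for every $S\in\binom{N_0}{k}$ there are at least $m$ distinct $w\in M'$ with $S\subseteq c_w^{-1}(t)$; mapping $N=[n]$ bijectively onto $N_0$ and distributing the $m$ copies of each $S$ among those $w$'s then produces a monochromatic (color~$t$) copy of $G=G(n,k,m)$ in $G'$, which is exactly the claimed Ramsey property.

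To locate $N_0$ and $t$ I would first choose $m':=(m-1)r^{k'}+1$, so that for every fixed $k'$-set $S'\subseteq N'$ at least $m$ of the vertices $(S',1),\dots,(S',m')$ induce the \emph{same} coloring of $S'$ (there are only $r^{k'}$ patterns). Reading this common pattern off in the natural order of $S'$ gives an $r^{k'}$-coloring of $\binom{N'}{k'}$. Next, apply Ramsey's theorem for $k'$-uniform hypergraphs: for $n'$ large enough there is a set $N_1\subseteq N'$ all of whose $k'$-subsets carry one and the same pattern $P=(p_1,\dots,p_{k'})$, and a pigeonhole on $P$ then yields a color $t$ appearing in at least $k$ coordinates, say coordinates $i_1<\dots<i_k$. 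Finally, if $N_1$ is taken large enough to contain an $n$-set $N_0$ whose consecutive elements (and whose two extremes) are separated by at least $k'$ elements of $N_1$, then every $S=\{s_1<\dots<s_k\}\in\binom{N_0}{k}$ can be padded with elements of $N_1$ to a $k'$-set $S'\subseteq N_1$ in which $s_1,\dots,s_k$ occupy precisely the positions $i_1,\dots,i_k$; since $S'$ carries the pattern $P$, the $\geq m$ vertices $(S',v')$ realizing that pattern all send each $s_j$ to color $t$, i.e.\ $S\subseteq c_{(S',v')}^{-1}(t)$. Fixing the padding by a deterministic rule makes $S\mapsto S'$ injective (indeed $S$ is recovered from $S'$ as the set of elements in positions $i_1,\dots,i_k$), so the $M'$-vertices used for distinct $S$ are distinct, completing the construction. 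Tracking parameters, $n'$ is the relevant $k'$-uniform hypergraph Ramsey number with $r^{k'}$ colors and target size roughly $(n+1)(k'+1)$, and $m'=(m-1)r^{k'}+1$.

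The main obstacle, and the reason the hypergraph Ramsey step is needed, is the mismatch between what the $M$-side pigeonhole gives --- \emph{some} monochromatic $k$-subset of each $k'$-set, with no control over which one --- and what the embedding of $G$ requires, namely that a \emph{prescribed} $k$-subset of $N_0$ be monochromatic in a common color. Stabilizing the order type of the coloring via Ramsey's theorem fixes the monochromatic positions once and for all, and the gap condition on $N_0$ then lets us slot any desired $k$-subset into those positions. Everything else --- the exact Ramsey number, the size of $N_1$ needed to extract $N_0$, and the verification that the final map is an embedding --- is routine bookkeeping.
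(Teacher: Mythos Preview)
The paper does not give its own proof of this lemma: it simply quotes it from \cite{MR2664330} (Lemma~2.6 there), noting only that the $r=2$ argument in that reference extends to general $r$. So there is no in-paper proof to compare against line by line.

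That said, your sketch is correct and is essentially the argument one finds in the cited source. The first item is indeed a trivial embedding (your observation that $\Delta_A(F)\leq |B|$ makes the $\max$ in your choice of $n$ unnecessary, but it does no harm). For the second item, the combination of (i) a pigeonhole on the $m'$ copies above each $k'$-set to stabilize the color pattern, (ii) hypergraph Ramsey on $\binom{N'}{k'}$ to make that pattern constant on a large set $N_1$, and (iii) spacing out $N_0$ inside $N_1$ so that every prescribed $k$-subset can be slotted into the $\geq k$ positions of the majority color, is exactly the mechanism used in \cite{MR2664330}; your recovery argument for the injectivity of $S\mapsto S'$ is clean and your parameter choices $m'=(m-1)r^{k'}+1$ and $|N_1|$ of order $(n+1)(k'+1)$ are adequate. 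Nothing is missing.
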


\begin{proof}[Proof of Lemma~\ref{lemma:bipartite-ub}]
Let $F=(A\dcup B,E)$ be a bipartite graph and let $d(F)$ be defined as in~\eqref{eq:dF}. Combining the two parts of Lemma~\ref{lem:bipartite-construction} shows that there are integers $n'$ and $m'$ such that the graph $G(n',r(d(F)-1)+1,m')$ defined in~\eqref{eq:G-n-k-m} is $(F,r)$-Ramsey. By Lemma~\ref{lem:density-G-n-k-m} we have $m(G(n',r(d(F)-1)+1,m'))<r(d(F)-1)+1$, as required.
\end{proof}

\section{Proof of Theorem~\texorpdfstring{\ref{thm:cycles}}{2}}
\label{sec:cycles}

The upper bound for even cycles and the lower bound for odd cycles follow immediately from Lemma~\ref{lemma:bipartite-ub} and the first part of Lemma~\ref{lemma:general-lb}, respectively, so it remains to prove the claimed lower bound for even cycles and the claimed upper bound for odd cycles.

In fact, the claimed lower bound holds for cycles of arbitrary length, not just for even cycles, but is rather weak for odd cycles.

For the proof we will apply a well-known result of Nash-Williams on the arboricity of a graph~\cite{MR0161333}.
To state the result we define for any (multi)graph $G$
\begin{equation} \label{eq:m1-density}
  m_1(G):=\max_{H\seq G: v(H)\geq 2} \frac{e(H)}{v(H)-1}
\end{equation}
(cf.~\eqref{eq:m-density}).

\begin{theorem}[Nash-Williams' arboricity theorem \cite{MR0161333}]
\label{thm:nash-williams}
Let $r\geq 1$ be an integer. A loopless multigraph $G$ can be partitioned into at most $r$ forests if and only if $m_1(G)\leq r$.
\end{theorem}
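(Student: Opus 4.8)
The statement has a trivial direction and a substantial one; the plan is to dispatch the ``only if'' part by a one-line counting argument and to obtain the ``if'' part from a matroid decomposition theorem.

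For necessity, suppose $G=F_1\cup\cdots\cup F_r$ is an edge-partition into forests. For any subgraph $H\seq G$ with $v(H)\ge 2$, each $F_i\cap H$ is a forest on at most $v(H)$ vertices, hence has at most $v(H)-1$ edges; summing over $i$ gives $e(H)\le r(v(H)-1)$, i.e.\ $\frac{e(H)}{v(H)-1}\le r$. Taking the maximum over all such $H$ yields $m_1(G)\le r$.

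For sufficiency, I would apply Edmonds' matroid union (partition) theorem to the graphic matroid $M=M(G)$, whose independent sets are exactly the forests of $G$ and whose rank function $\rho$ satisfies $\rho(A)=v(G)-c(V(G),A)$, where $c(V(G),A)$ is the number of connected components of the spanning subgraph $(V(G),A)$. The matroid union theorem states that the ground set $E(G)$ is a union of $r$ independent sets of $M$ --- equivalently, $G$ decomposes into $r$ forests --- if and only if $|A|\le r\cdot\rho(A)$ for every $A\seq E(G)$. Because $\rho$ is additive over the connected components of $(V(G),A)$ and $\rho(E(H))=v(H)-1$ whenever $H$ is connected with $v(H)\ge 2$ (isolated vertices and edgeless components contribute nothing to either side), this family of inequalities holds as soon as $e(H)\le r(v(H)-1)$ for every connected subgraph $H$ with $v(H)\ge 2$. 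Finally, the maximum defining $m_1(G)$ is attained at a connected subgraph: if $H$ has components $H_1,\ldots,H_t$ with at least one edge each, the mediant inequality gives $\frac{e(H)}{v(H)-1}\le\max_j\frac{e(H_j)}{v(H_j)-1}$ (and deleting isolated vertices only increases the ratio), so $m_1(G)\le r$ is exactly equivalent to the displayed family of inequalities, completing the proof.

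Alternatively, one can give a self-contained proof of sufficiency by induction on $e(G)$: pick an edge $e=uv$, decompose $G-e$ into forests $F_1,\ldots,F_r$ by induction (using $m_1(G-e)\le m_1(G)\le r$), and if $e$ cannot be added to any $F_i$ without creating a cycle, run an augmenting-exchange procedure that repeatedly tries to move an edge off the unique $u$--$v$ path in some $F_i$ into another forest; this either frees up a forest for $e$ or terminates at a connected subgraph $H$ with $e(H)>r(v(H)-1)$, contradicting $m_1(G)\le r$. The main obstacle in either route is the sufficiency direction: in the matroid route it is encapsulated in (the augmenting-path proof of) the matroid union theorem, and in the direct route it lies in setting up the exchange procedure so that, upon failure, the set of vertices reachable by exchanges is precisely the vertex set of an overfull subgraph.
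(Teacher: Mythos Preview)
The paper does not supply its own proof of this statement: Theorem~\ref{thm:nash-williams} is quoted from Nash-Williams' original paper~\cite{MR0161333} and used as a black box throughout Sections~\ref{sec:cycles}--\ref{sec:paths}. So there is nothing in the paper to compare your argument against.

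That said, your proof is correct. The necessity direction is the standard counting argument, and for sufficiency the reduction to Edmonds' matroid partition theorem for the graphic matroid is one of the canonical routes; your verification that the rank condition $|A|\le r\cdot\rho(A)$ for all $A\subseteq E(G)$ is equivalent to $e(H)\le r(v(H)-1)$ for every \emph{connected} subgraph $H$ (via additivity of $\rho$ over components and the mediant inequality) is clean and complete. The alternative augmenting-exchange argument you sketch is essentially Nash-Williams' original proof idea. Either route is fine; the only remark is that both ultimately rely on an exchange/augmenting-path mechanism of comparable depth, so neither is genuinely more elementary than the other.
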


\TM{Verallgemeinerung von Nash-Williams? Ein Graph $G$ kann genau dann in $r$ Graphen $G_1,\ldots,G_r$ mit $m_1(G)\leq c$ partitioniert werden, falls $m_1(G)\leq c\cdot r$ gilt. Für $c=1$ ist dies genau Theorem~\ref{thm:nash-williams}. Für $c=2,3,4,\ldots$ folgt dies unmittelbar durch mehrfache Anwendung von Theorem~\ref{thm:nash-williams}. Was ist für beliebige Werte $c\in\mathbb{R}$? Eine weitere Frage: Kann man $m_1()$ im Theorem (oder der Verallgemeinerung) durch $m()$ ersetzen?}

We next state an application of Theorem~\ref{thm:nash-williams}. To do so we define for any graph $G=(V,E)$ and any integer $k\geq 2$
\begin{equation} \label{eq:m1k-density}
  m_1(G,k):=\max_{H\seq G: v(H)\geq k} \frac{e(H)}{v(H)-1} \enspace.
\end{equation}
Note that we have
\begin{equation*}
  m_1(G)=m_1(G,2) \geq m_1(G,3) \geq m_1(G,4) \geq \cdots \enspace.
\end{equation*}

\begin{proposition}[Partition into $C_\ell$-free graphs]
\label{prop:nw-cycles}
Let $\ell\geq 3$ and $r\geq 1$ be integers. Any graph $G$ satisfying $m_1(G,R(C_\ell,r))\leq r$ can be partitioned into $r$ graphs which contain no $C_\ell$ as a subgraph.
\end{proposition}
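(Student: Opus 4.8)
The plan is to combine Nash-Williams' arboricity theorem (Theorem~\ref{thm:nash-williams}) with the defining property of the Ramsey number $R(C_\ell,r)$, namely that $K_{R(C_\ell,r)}$ admits an $r$-coloring with no monochromatic $C_\ell$ (indeed $K_{R(C_\ell,r)-1}$ does, but we will only need a coloring of $K_N$ with $N=R(C_\ell,r)$ that may contain monochromatic cliques on fewer vertices, and we want to \emph{avoid} $C_\ell$, so we actually fix a coloring of $K_{R(C_\ell,r)-1}$ with no monochromatic $C_\ell$ and think of it as a coloring of $K_N$ restricted). Let me set $N:=R(C_\ell,r)$ and fix once and for all an $r$-edge-coloring $c_0$ of $K_{N-1}$ containing no monochromatic $C_\ell$; such a coloring exists by definition of $N$. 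The idea is that any graph $G$ with $m_1(G,N)\le r$ is, componentwise, sparse enough that after a Nash-Williams decomposition into $r$ forests we can ``re-bundle'' small pieces using the fixed coloring $c_0$ of $K_{N-1}$ to kill the short cycles that forests cannot create anyway, while large pieces are handled by the forest structure directly.

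**Key steps.**
First I would show that $m_1(G,N)\le r$ implies $G$ can be partitioned into $r$ subgraphs $G_1,\dots,G_r$ such that each $G_i$ has the property that every connected component of $G_i$ has at most $N-1$ vertices \emph{or} is a forest --- more precisely, I want each $G_i$ to have the property that every subgraph on $\ge N$ vertices is a forest, i.e.\ $G_i$ restricted to any $\ge N$ vertices is acyclic. The route: the hypothesis $m_1(G,R(C_\ell,r))\le r$ controls the density of all large subgraphs but says nothing about subgraphs on $<N$ vertices, so I would first split off the ``dense small part.'' Concretely, I would partition $V(G)$ greedily into blocks so that each block spans $<N$ vertices, colour all edges inside a block according to an embedding of that block into $K_{N-1}$ with the fixed colouring $c_0$ (this introduces no monochromatic $C_\ell$ since $C_\ell$ has $\ell$ vertices and, if $\ell\le N-1$, $c_0$ has none; note $\ell < N=R(C_\ell,r)$ always), and be left with the ``long-range'' edges connecting different blocks. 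The graph $G'$ of long-range edges should then satisfy $m_1(G')\le r$: any subgraph $H\subseteq G'$ spans at least as many vertices as it has blocks touched, and I would argue $v(H)\ge N$ whenever $e(H)>v(H)-1$, so that $m_1(G')\le\max\{m_1(G,N),\,1\}=r$. Then apply Theorem~\ref{thm:nash-williams} to decompose $G'$ into $r$ forests $T_1,\dots,T_r$. Finally, merge the block-coloring with the forest decomposition: put colour-$i$ long-range edges ($T_i$) together with the colour-$i$ intra-block edges. Each colour class is now a forest plus a disjoint union of coloured cliques of size $<N$; a cycle in such a graph either lies inside one block (impossible, since $c_0$ has no monochromatic $C_\ell$) or uses a long-range edge but then... this last case needs care, which is the obstacle below.

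**Main obstacle.**
The delicate point is that merging a forest of long-range edges with intra-block cliques can \emph{create} cycles of length $\ell$ that were not present in either part: a cycle could alternate between a forest path inside a block's vertex set and a long-range edge. The clean fix is to be more careful about the block decomposition: rather than an arbitrary greedy partition I would take the \emph{blocks to be the vertex sets of the connected components obtained after removing a carefully chosen sparse ``skeleton''}, or alternatively iterate --- repeatedly peel off a maximal subgraph $H$ with $v(H)<N$ and $e(H)=v(H)-1$ (a ``small tree-excess'' piece) is not quite it either. I expect the correct statement to be: because $m_1(G,N)\le r$, one can order $V(G)$ as $v_1,\dots,v_n$ so that every $v_i$ has back-degree $\le r$ \emph{into the part of $G$ on $\ge N-1$ preceding vertices}, handle the first $N-1$ vertices by $c_0$, and then colour each subsequent $v_i$'s $\le r$ back-edges with $r$ distinct colours, so that each colour class, restricted to $\{v_N,\dots,v_n\}$-incident edges, has maximum back-degree $1$ and is therefore a forest, while the bounded initial segment carries the only potential monochromatic cliques and those are $C_\ell$-free by choice of $c_0$. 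Verifying that the colour classes genuinely contain no $C_\ell$ --- in particular ruling out a cycle that dips into the first $N-1$ vertices and comes back out along forest edges --- is where I would spend the most effort, and I anticipate it forces $\ell\le N-1$ (which holds, as $\ell<R(C_\ell,r)$) plus a short argument that any $C_\ell$ entirely within the ``forest part'' is impossible and any $C_\ell$ meeting the initial segment must lie within it.
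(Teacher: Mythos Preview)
Your overall plan --- isolate small dense pieces, $r$-colour them $C_\ell$-freely via the Ramsey number, and handle the rest with Nash--Williams --- is exactly the paper's strategy, and you have correctly identified the real obstacle (a forest of long-range edges plus intra-block paths can manufacture new cycles). However, neither of your proposed implementations closes the gap.

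For the block approach, the assertion that the inter-block graph $G'$ satisfies $m_1(G')\le r$ is false for an \emph{arbitrary} vertex partition into blocks of size $<N$. Nothing prevents $G'$ from containing a dense subgraph on fewer than $N$ vertices; for instance, if the blocks are singletons then $G'=G$, and the hypothesis $m_1(G,N)\le r$ says nothing whatsoever about subgraphs on $<N$ vertices. The sentence ``$v(H)\ge N$ whenever $e(H)>v(H)-1$'' simply does not follow from the block structure.

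For the ordering approach, the required ordering need not exist. Take $r=2$, $N=R(C_\ell,2)$, and let $G$ be any $3$-regular graph on at least $N$ vertices. Every subgraph $H$ with $v(H)\ge 4$ satisfies $e(H)\le \tfrac{3}{2}v(H)\le 2(v(H)-1)$, so $m_1(G,N)\le 2$; yet $G$ has minimum degree $3$, so no vertex can be placed last with back-degree $\le 2$. The existence of an acyclic orientation with in-degree $\le r$ is equivalent to $m_1(G)\le r$, which is strictly stronger than the hypothesis $m_1(G,N)\le r$.

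What the paper does instead is prove a \emph{Contraction Lemma}: one repeatedly contracts every \emph{maximal} subgraph $H$ with $e(H)/(v(H)-1)>r$ to a single vertex, and shows that (i) all such $H$ automatically have $v(H)<N$ by the hypothesis, (ii) after finitely many rounds they are pairwise vertex-disjoint, and (iii) the resulting multigraph $\bar G$ satisfies $m_1(\bar G)\le r$. Nash--Williams then gives $r$ forests \emph{in $\bar G$}, not in $G$ itself. Pulling these back to $G$ and combining with a $C_\ell$-free $r$-colouring inside each contracted piece works precisely because any cycle in a colour class that uses at least one ``forest'' edge would project, after contraction, to a closed walk with all distinct edges in a forest of $\bar G$ --- impossible. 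The contraction, rather than mere deletion of intra-block edges, is exactly the device that kills the ``dip in and out of a block'' cycles you were worried about; your proposal is missing both this choice of blocks and this contraction step.
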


Proposition~\ref{prop:nw-cycles} also holds if $G$ is a \emph{multigraph}, but we do not need this generalization here (but we do need the multigraph version of Theorem~\ref{thm:nash-williams} to prove Proposition~\ref{prop:nw-cycles}).

For the proof of Proposition~\ref{prop:nw-cycles} we need the following lemma.

\begin{lemma}[Contraction lemma]
\label{lem:contraction}
Let $k\geq 2$ and $r\geq 1$ be integers. For any graph $G$ with $m_1(G,k)\leq r$ there is a family $\cH$ of vertex-disjoint subgraphs of $G$ such that any $H\in\cH$ satisfies $v(H)<k$ and the multigraph $\Gbar$ obtained from $G$ by contracting every $H\in\cH$ into a single vertex satisfies $m_1(\Gbar)\leq r$.
\end{lemma}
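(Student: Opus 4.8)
The plan is to realise $\cH$ as the non-trivial parts of a suitably chosen partition of $V(G)$, selected by an extremal argument. Call a nonempty set $W\seq V(G)$ \emph{$r$-dense} if $e(G[W])\geq r(|W|-1)$ (so every singleton is $r$-dense), and call a partition $\mathcal{P}$ of $V(G)$ \emph{admissible} if every part $W\in\mathcal{P}$ has $|W|<k$ and is $r$-dense. The partition of $V(G)$ into singletons is admissible, so admissible partitions exist; among all of them I would fix one, $\mathcal{P}$, with the fewest parts (the number of parts is a positive integer, so a minimiser exists). Then set $\cH:=\{G[W] : W\in\mathcal{P},\ |W|\geq 2\}$: these subgraphs are vertex-disjoint, each has fewer than $k$ vertices, and contracting all of them yields precisely $\Gbar=G/\mathcal{P}$.

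It then remains to show $m_1(\Gbar)\leq r$. I would argue by contradiction: suppose some $\bar H\seq\Gbar$ with $v(\bar H)\geq 2$ satisfies $e(\bar H)>r(v(\bar H)-1)$; we may take $\bar H$ induced. Let $W_1,\dots,W_s$ (with $s=v(\bar H)\geq 2$) be the parts of $\mathcal{P}$ corresponding to the vertices of $\bar H$, put $U:=W_1\cup\dots\cup W_s$ and $H:=G[U]$. The two bookkeeping identities driving everything are $v(H)=\sum_i|W_i|$ and $e(H)=e(\bar H)+\sum_i e(G[W_i])$, the latter because every edge of $H$ lies either inside one $W_i$ or between two of them, and the between-edges are exactly the edges of $\bar H$, counted with multiplicity. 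If $v(H)\geq k$, then $m_1(G,k)\leq r$ gives $e(H)\leq r(v(H)-1)$, and subtracting off $\sum_i e(G[W_i])\geq \sum_i r(|W_i|-1)$ (each $W_i$ is $r$-dense) leaves $e(\bar H)\leq r(s-1)$, contradicting the choice of $\bar H$. If instead $v(H)<k$, then $U$ has fewer than $k$ vertices and the same identities show $e(H)>r(v(H)-1)$, so $U$ is $r$-dense; replacing the $s\geq 2$ parts $W_1,\dots,W_s$ of $\mathcal{P}$ by the single part $U$ produces an admissible partition with strictly fewer parts, contradicting minimality of $\mathcal{P}$.

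The only slightly delicate points are these identities — in particular that a multi-edge of $\Gbar$ between two contracted parts is counted by $e(\bar H)$ with exactly the multiplicity given by the number of $G$-edges between those parts — together with the observation that admissible partitions form a nonempty family over which ``number of parts'' attains a minimum, so the extremal choice is legitimate. I expect the conceptual crux, rather than any computation, to be seeing \emph{why} such a partition works at all: $m_1(G,k)\leq r$ controls only subgraphs on at least $k$ vertices, so a dense subgraph of $\Gbar$ whose preimage in $G$ has fewer than $k$ vertices is not ruled out directly; the $r$-density-plus-minimality setup is engineered precisely so that any such small dense preimage could have been absorbed into a single part, which the choice of $\mathcal{P}$ forbids.
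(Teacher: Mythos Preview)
Your proof is correct and, in my view, cleaner than the paper's. The key difference is that the paper argues \emph{iteratively}: it sets $G_0:=G$ and, at each step, contracts a maximal vertex-disjoint family of subgraphs $H$ with $e(H)/(v(H)-1)>r$ that are maximal with respect to this density condition, repeating until no such subgraphs remain. The technical heart of the paper's argument is a claim that no later contraction ever touches a vertex produced by an earlier contraction; this is shown by a supersaturation-style contradiction exploiting the maximality of the earlier dense subgraph. Only once this claim is established does one know that the resulting family $\cH$ lives inside the original $G$.

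Your extremal choice---an admissible partition with the fewest parts---achieves the same end in one stroke. The $r$-density condition on parts plays exactly the role that the iterative maximality plays in the paper: it guarantees that the inequalities $\sum_i e(G[W_i])\geq r\sum_i(|W_i|-1)$ telescope against $e(H)\leq r(v(H)-1)$ when $v(H)\geq k$, and it makes the merged set $U$ a legal single part when $v(H)<k$. The paper's approach is slightly more algorithmic in flavour (one sees explicitly how to build $\cH$), while yours is more economical and sidesteps the delicate ``no re-contraction'' claim entirely. Both yield the same conclusion with the same quantitative content.
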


\begin{proof}
We define a sequence of multigraphs $(G_i)_{i\geq 0}$ and a sequence $(\cH_i)_{i\geq 0}$, where $\cH_i$ is a family of subgraphs of $G_i$ as follows: Set $G_0:=G$, and for each $i\geq 0$ we define $\cH_i$ and $G_{i+1}$ inductively: We define $\tcH_i$ as the family of all subgraphs $H\seq G_i$ that satisfy $e(H)/(v(H)-1)>r$ and that are maximal with respect to this property (i.e., every proper supergraph $H'\supsetneq H$ of $G_i$ satisfies $e(H')/(v(H')-1)\leq r$). Then we let $\cH_i$ be a maximal subfamily of $\tcH_i$ with the property that any two different graphs $H,H'\in\cH_i$ are vertex-disjoint subgraphs of $G_i$. Let $G_{i+1}$ denote the multigraph obtained from $G_i$ by contracting every subgraph $H\in\cH_i$ into a single vertex (note that this may create multiple edges, but no loops).

\TM{Ein Beispiel für $r=2$ und einen Graphen, bei dem $\cH_i\subsetneq \tcH_i$ gilt: Ein $K_7$, ein $K_6$ und ein $K_7$, wobei jeder der $K_7$ mit dem $K_6$ über jeweils zwei Pfade bestehend aus 7 Kanten verbunden ist.}

We claim that for any $i\geq 1$ and any subgraph $H\in\cH_i$ of $G_i$, all vertices of $H$ are also vertices of $G_0=G$, i.e., none of the vertices of $H$ is obtained by contracting some $H'\in\cH_j$, $j<i$. To see this, suppose the claim was false, and consider the smallest $i\geq 1$ for which the claim was violated, i.e., consider a subgraph $H\in\cH_i$ of $G_i$ and a nonempty maximal set of graphs $\cH'\seq \bigcup_{0\leq j<i}\cH_i$ such that $H$ contains the vertices obtained from contracting each graph $H'\in\cH'$.
By the minimal choice of $i$, all graphs in $\cH'$ are vertex-disjoint subgraphs of $G_0=G$.
We clearly have
\begin{equation} \label{eq:H-H'-dense}
  \frac{e(H)}{v(H)-1}>r \qquad \text{and} \qquad \frac{e(H')}{v(H')-1}>r \quad \text{for all $H'\in\cH'$} \enspace.
\end{equation}
Let $0\leq j<i$ be the minimal integer for which a graph $J$ from $\cH_j$ is contained in $\cH'$. We show that the graph
\begin{equation*}
  J^+:=H\cup{} \bigcup_{H'\in\cH'}H' \enspace,
\end{equation*}
which is a proper supergraph of $J$, satisfies $e(J^+)/(v(J^+)-1)>r$, contradicting the maximal choice of $J$ in $G_j$. Note that we have
\begin{equation}
\begin{split} \label{eq:e-v-J+}
  e(J^+) &= e(H)+\sum_{H'\in\cH'} e(H') \enspace, \\
  v(J^+) &= v(H)+\sum_{H'\in\cH'} (v(H')-1) \enspace.
\end{split}
\end{equation}
Combining the above observations yields
\begin{equation*}
  \frac{e(J^+)}{v(J^+)-1} \eqBy{eq:e-v-J+} \frac{e(H)+\sum_{H'\in\cH'} e(H')}{v(H)-1+\sum_{H'\in\cH'} (v(H')-1)} \gBy{eq:H-H'-dense} r \enspace,
\end{equation*}
the desired contradiction.

By the above claim, \emph{all} graphs in $\cH:=\bigcup_{i\geq 0} \cH_i$ are vertex-disjoint subgraphs of $G_0=G$. It follows that by directly contracting every graph $H\in\cH$ into a single vertex we obtain a multigraph $\Gbar$ which by the definition of the graph sequence above satisfies $m_1(\Gbar)\leq r$ (otherwise we would have continued contracting subgraphs). Furthermore, it follows that every subgraph $H\in\cH$ of $G$ satisfies $e(H)/(v(H)-1)>r$, which by the assumption $m_1(G,k)\leq r$ implies that $v(H)<k$. This completes the proof.
\end{proof}

\begin{proof}[Proof of Proposition~\ref{prop:nw-cycles}]
Let $G$ be a graph with $m_1(G,R(C_\ell,r))\leq r$. By Lemma~\ref{lem:contraction} there is a family $\cH$ of vertex-disjoint subgraphs of $G$ such that any $H\in\cH$ satisfies $v(H)<R(C_\ell,r)$ and the multigraph $\Gbar$ obtained from $G$ by contracting every $H\in\cH$ into a single vertex satisfies $m_1(\Gbar)\leq r$.

Therefore, using that the subgraphs $H\in\cH$ of $G$ are vertex-disjoint and that $v(H)<R(C_\ell,r)$ holds for each of them, we can partition the edges contained in all those subgraphs into $r$ sets $E_1,\ldots,E_r$, such that none of those edge sets contains a $C_\ell$ as a subgraph.

Furthermore, using that $m_1(\Gbar)\leq r$ we can apply Theorem~\ref{thm:nash-williams} to partition the edges of $\Gbar$ into at most~$r$ forests. This clearly also yields a partition of the corresponding edges of $G$ into at most $r$~forests $F_1,\ldots,F_r$. It is easy to see that $E_1\cup F_1,\ldots,E_r\cup F_r$ is a $C_\ell$-free partition of the edges of~$G$, as desired.
\end{proof}

The next lemma shows that $m(G)$ is not much smaller than $m_1(G,k)$.

\begin{lemma}[Small $m$-density implies small $m_1$-density] \label{lem:min-m-G}
For any integers $k\geq 2$ and $r\geq 1$ and any graph $G$, if $m(G)<r-\frac{r-1}{\max\{k,2r+1\}}$ then we have $m_1(G,k)\leq r$.
\end{lemma}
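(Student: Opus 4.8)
The plan is to verify the defining inequality $e(H)/(v(H)-1)\le r$ directly for every subgraph $H\subseteq G$ with $v(H)\ge k$, splitting into two ranges according to the number of vertices $n:=v(H)$, namely $n\le 2r$ and $n\ge 2r+1$ (which together cover all $n\ge k$, since $v(H)\ge k\ge 2$ anyway makes the ratio well defined). Taking the maximum over all such $H$ then gives $m_1(G,k)\le r$.

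First I would treat the ``small'' range $n\le 2r$. Here one uses only the trivial bound $e(H)\le\binom{n}{2}=\tfrac{n(n-1)}{2}$, which together with $n\le 2r$ gives $e(H)\le r(n-1)$ and hence $e(H)/(n-1)\le r$; the hypothesis on $m(G)$ is not even needed. Then I would treat the ``large'' range $n\ge 2r+1$. Put $M:=\max\{k,2r+1\}$; since also $n\ge k$, we have $n\ge M$. Applying the assumption $m(G)<r-\tfrac{r-1}{M}$ to $H$ yields $e(H)<rn-\tfrac{(r-1)n}{M}$, and since $e(H)$ and $rn$ are integers this sharpens to $e(H)\le rn-\bigl\lfloor\tfrac{(r-1)n}{M}\bigr\rfloor-1$. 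From $n\ge M$ we get $\tfrac{(r-1)n}{M}\ge r-1$, and as $r-1$ is a nonnegative integer this gives $\bigl\lfloor\tfrac{(r-1)n}{M}\bigr\rfloor\ge r-1$, whence $e(H)\le rn-(r-1)-1=r(n-1)$ and again $e(H)/(n-1)\le r$.

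I do not expect a genuine obstacle here: the statement is essentially a calculation. The only point requiring care is the bookkeeping of the case split and the floor estimate — in particular checking that the threshold $2r+1$ (which is exactly why $2r+1$ appears inside the $\max$ in the statement of the lemma) is chosen so that the ``small'' graphs, bounded trivially by $\binom{n}{2}$, and the ``large'' graphs, bounded via the density hypothesis, meet with no gap. The degenerate cases $r=1$ (where the two ranges are $n=2$ and $n\ge 3$) and $n=k$ are subsumed by this split and need no separate treatment.
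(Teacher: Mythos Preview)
Your proof is correct and essentially the same as the paper's: the paper proves the contrapositive (assume $m_1(G,k)>r$, pick a witness $H$, use $e(H)\le\binom{v(H)}{2}$ to force $v(H)\ge 2r+1$, then use integrality to get $e(H)\ge r\,v(H)-(r-1)$ and hence $m(G)\ge r-\frac{r-1}{\max\{k,2r+1\}}$), while you argue directly with a case split on $v(H)\le 2r$ versus $v(H)\ge 2r+1$; the two key ingredients --- the trivial edge bound for small $H$ and the density hypothesis plus integrality for large $H$ --- are identical.
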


\begin{proof}
We prove the contrapositive.
If $m_1(G,k)>r$ then by the definition in \eqref{eq:m1k-density} there is a subgraph $H\seq G$ satisfying
\begin{equation} \label{eq:eH-vH}
  \frac{e(H)}{v(H)-1}>r
\end{equation}
and
\begin{equation} \label{eq:vH1}
  v(H) \geq k \enspace.
\end{equation}
Clearly, we must have $e(H)\leq \binom{v(H)}{2}$, which combined with \eqref{eq:eH-vH} and using that $v(H)$ and $r$ are integers yields
\begin{equation} \label{eq:vH2}
  v(H) \geq 2r+1 \enspace.
\end{equation}
Again using that $e(H)$, $v(H)$ and $r$ are all integers, it follows from \eqref{eq:eH-vH} that
\begin{equation} \label{eq:eH-lb}
  e(H)\geq r(v(H)-1)+1=r v(H)-(r-1) \enspace.
\end{equation}
Combining the previous observations we obtain that
\begin{equation*}
  m(G)\geBy{eq:m-density} \frac{e(H)}{v(H)} \geBy{eq:eH-lb} r-\frac{r-1}{v(H)} \geByM{\eqref{eq:vH1},\eqref{eq:vH2}} r-\frac{r-1}{\max\{k,2r+1\}} \enspace,
\end{equation*}
as claimed.
\end{proof}

\begin{proof}[Proof of Theorem~\ref{thm:cycles}: lower bound for even cycles]
Let $G$ be a graph with $m(G)<r-\frac{r-1}{\max\{R(C_\ell,r),2r+1\}}$. By Lemma~\ref{lem:min-m-G} we have $m_1(G,R(C_\ell,r))\leq r$. Applying Proposition~\ref{prop:nw-cycles} shows that there is a coloring of the edges of $G$ with $r$ colors that avoids monochromatic copies of $C_\ell$.
\end{proof}

\begin{proof}[Proof of Theorem~\ref{thm:cycles}: upper bound for odd cycles]
In~\cite{MR1356576} Haxell, Kohayakawa and {\L}uczak defined for any integer $r\geq 2$ and for any sufficiently large integer $n\geq 1$ a graph $G=G(n,r)$ which (besides a number of other important properties) has a maximum degree that is bounded by a function depending only on~$r$ (Lemma~9 in~\cite{MR1356576}). It follows that also $m(G)$ is bounded by a function depending only on~$r$. The authors proved that for any fixed $r\geq 2$, the graph $G=G(n,r)$ has the property that, for any $r$-coloring of its edges, there is a color $s$ such that $G$ contains a monochromatic (induced) cycle $C_\ell$ in color $s$ for all $b\log n \leq \ell \leq b'n$, where $b=b(r)>0$ and $b'=b'(r)>0$ are functions depending only on~$r$ (Theorem~10 in~\cite{MR1356576}). Together these two results prove the existence of an upper bound on $m^*(C_\ell,r)$ that depends only on $r$.
\end{proof}

\section{Proof of Theorem~\texorpdfstring{\ref{thm:bicliques}}{1}}
\label{sec:proof-bicliques}

The upper bound follows immediately from Lemma~\ref{lemma:bipartite-ub}, so it remains to prove the claimed lower bound.

We will apply Nash-Williams' arboricity theorem (Theorem~\ref{thm:nash-williams}) to prove the following proposition.

\begin{proposition}[Partition into $K_{a,b}$-free graphs]
\label{prop:nw-bipartite}
Let $a\geq 2$, $b\geq (a-1)^2+1$ and $r\geq 1$ be integers. Any graph $G$ satisfying $m_1(G,R(K_{a,b},r))\leq r(a-1)$ can be partitioned into $r$ graphs which contain no $K_{a,b}$ as a subgraph.
\end{proposition}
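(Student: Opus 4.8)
The plan is to mirror the proof of Proposition~\ref{prop:nw-cycles}. First I would apply the Contraction Lemma (Lemma~\ref{lem:contraction}) with $k:=R(K_{a,b},r)$ and with the role of ``$r$'' there played by the integer $r(a-1)$: since $m_1(G,R(K_{a,b},r))\leq r(a-1)$ by assumption, this produces a family $\cH$ of vertex-disjoint subgraphs of $G$, each with $v(H)<R(K_{a,b},r)$, such that the loopless multigraph $\Gbar$ obtained from $G$ by contracting every $H\in\cH$ to a single vertex satisfies $m_1(\Gbar)\leq r(a-1)$. Because $v(H)<R(K_{a,b},r)$, the complete graph on $V(H)$, and hence $H$ itself, admits an $r$-coloring of its edges with no monochromatic $K_{a,b}$; performing this independently for every (vertex-disjoint) $H\in\cH$ yields a partition $E_1,\ldots,E_r$ of all edges of $G$ lying inside some subgraph of $\cH$, with each $E_i$ being $K_{a,b}$-free. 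On the other hand, $m_1(\Gbar)\leq r(a-1)$ and Nash-Williams' theorem (Theorem~\ref{thm:nash-williams}) let us split $\Gbar$ into $r(a-1)$ forests; grouping these into $r$ bundles of $a-1$ forests each and pulling the bundles back to $G$ gives edge sets $U_1,\ldots,U_r$ whose images in $\Gbar$ are unions of $a-1$ forests. The candidate partition is $E_1\cup U_1,\ldots,E_r\cup U_r$, and it remains only to check that each color class $G_i:=E_i\cup U_i$ is $K_{a,b}$-free.

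Suppose some $G_i$ contained a copy $K\cong K_{a,b}$ with parts $A$ and $B$, $|A|=a\leq b=|B|$. Contract every $H\in\cH$. An edge of $K$ lying inside some $H\in\cH$ belongs to $E_i$ and disappears; every other edge of $K$ belongs to $U_i$, survives, and distinct such edges become distinct (possibly parallel) edges of $\Gbar$. Thus the image $\overline{K}$ of $K$ is a multigraph on $p$ vertices, where $p$ counts the subgraphs of $\cH$ meeting $V(K)$ plus the vertices of $K$ lying in no such subgraph, and all edges of $\overline{K}$ lie in a union of $a-1$ forests of $\Gbar$, whence $e(\overline{K})\leq (a-1)(p-1)$. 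If $p=1$ then all $ab$ edges of $K$ sit inside a single $H\in\cH$, contradicting the $K_{a,b}$-free coloring chosen for $H$; so $p\geq 2$. Setting $a_H:=|A\cap V(H)|$ and $b_H:=|B\cap V(H)|$ for the subgraphs $H\in\cH$ meeting $V(K)$, we have $e(\overline{K})=ab-\sum_H a_Hb_H$, and since $K[V(H)]=K_{a_H,b_H}$ is contained in the $K_{a,b}$-free graph $E_i[V(H)]$, necessarily $(a_H,b_H)\neq(a,b)$ for each such $H$.

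The contradiction now follows from the combinatorial fact that whenever a set $A\dcup B$ with $|A|=a\geq 2$ and $|B|=b\geq (a-1)^2+1$ is partitioned into $p\geq 2$ nonempty parts, the number of $A$--$B$ pairs with endpoints in distinct parts, namely $ab-\sum_P |P\cap A|\,|P\cap B|$, strictly exceeds $(a-1)(p-1)$; applied to the partition of $V(K)$ induced by the subgraphs of $\cH$ (with singletons for the remaining vertices), this contradicts $e(\overline{K})\leq(a-1)(p-1)$. To prove the fact I would track the quantity $f:=\big(ab-\sum_P|P\cap A|\,|P\cap B|\big)-(a-1)(p-1)$ under splitting a part $P$ into two parts $P',P''$, where $f$ changes by $-(a-1)+\big(|P'\cap A|\,|P''\cap B|+|P''\cap A|\,|P'\cap B|\big)$: if no part contains all of $A$ then every part $P$ has $|P\cap A|\leq a-1$, so splitting off single $B$-vertices and then single $A$-vertices never increases $f$ and leads to the all-singletons partition, for which $f=b-(a-1)^2\geq 1$; and if some part $P^*$ contains all of $A$ (the case of all of $B$ is symmetric), the remaining $p-1$ parts are pure $B$-parts, so $p-1\leq |B|-|P^*\cap B|$ and a direct computation gives $f\geq |B|-|P^*\cap B|\geq p-1\geq 1$. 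The main obstacle is precisely this counting step: the crude bound $e(K\cap U_i)\leq(a-1)(a+b-1)$ that one gets without contracting is too weak to yield a contradiction, so passing to $\overline{K}$ (where $p$ can be far smaller than $a+b$) is essential, and the hypothesis $b\geq(a-1)^2+1$ is exactly what makes the extremal all-singletons case work out.
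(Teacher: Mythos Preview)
Your overall architecture is exactly that of the paper: apply the Contraction Lemma with parameter $r(a-1)$, colour each small piece $H$ using $v(H)<R(K_{a,b},r)$, decompose $\Gbar$ via Nash--Williams into $r(a-1)$ forests, and group these into $r$ bundles of $a-1$ forests.  Where you diverge is in the verification that each colour class $E_i\cup U_i$ is $K_{a,b}$-free.  The paper isolates this as a separate lemma (Lemma~\ref{lem:union-bipartite-free}) and proves it by constructing an auxiliary $(a-1)$-edge-coloured multigraph $P$ on the vertex set $A$: for each $v\in B$ one produces an edge of $P$ using one of the forests $F_1,\dots,F_{a-1}$, so $P$ has at least $(a-1)^2+1$ edges on $a$ vertices in $a-1$ colours, and pigeonhole then forces a monochromatic cycle in $P$, which unwinds to a cycle in some $F_j$.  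Your argument instead never looks at the individual forests: you only use the consequence that a union of $a-1$ forests on $p$ vertices has at most $(a-1)(p-1)$ edges, and you couple this with the purely combinatorial inequality $ab-\sum_P|P\cap A|\,|P\cap B|>(a-1)(p-1)$ for any nontrivial partition of $A\dcup B$.  Your splitting/merging analysis of the quantity $f$ is correct, and the extremal all-singletons case is precisely where $b\geq(a-1)^2+1$ is needed, just as in the paper.  In effect, the paper exploits the \emph{colouring} of $U_i$ by the $a-1$ forests, whereas your argument only needs that the image of $U_i$ in $\Gbar$ satisfies $m_1\leq a-1$; this is a mild conceptual gain (and arguably more elementary), though by Nash--Williams the two hypotheses are equivalent.
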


As we shall see, the proof of Proposition~\ref{prop:nw-bipartite} is very similar to the proof of Proposition~\ref{prop:nw-cycles} presented in Section~\ref{sec:cycles}. The idea is to partition the edges of $G$ into $r(a-1)$ forests, and then group them into $r$ groups of size $a-1$ (this is where the proof differs from the cycle case where the $r$ forests are already the final partition). The next lemma shows that we do not create a copy of $K_{a,b}$ by taking the union of $a-1$ forests.

\begin{lemma}[$K_{a,b}$-free union of forests]
\label{lem:union-bipartite-free}
Let $a\geq 2$ and $b\geq (a-1)^2+1$ be integers. Furthermore, let $G$ be a graph and $\cH$ a family of vertex-disjoint subgraphs of $G$ such that the following conditions hold: The graphs $H\in\cH$ in $G$ are all $K_{a,b}$-free, and denoting by $\Gbar$ the multigraph obtained from contracting every subgraph $H\in\cH$ of $G$ into a single vertex, $\Gbar$ is the union of at most $a-1$ forests $F_1,\ldots,F_{a-1}$. Then $G$ contains no $K_{a,b}$ as a subgraph.
\end{lemma}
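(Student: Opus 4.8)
The plan is to argue by contradiction, transferring a hypothetical copy of $K_{a,b}$ to the contracted graph $\overline G$ and comparing edge and vertex counts there. Suppose $G$ contains a copy $K$ of $K_{a,b}$, with vertex classes $A_0$ and $B_0$ of sizes $a$ and $b$. For each $H\in\cH$ that meets $K$, set $x_H:=|A_0\cap V(H)|$ and $y_H:=|B_0\cap V(H)|$ (so $x_H+y_H\geq 1$), and let $\overline K\seq\overline G$ be the image of $K$ under the contraction: its vertices are the distinct images of the vertices of $K$, and its edges are the images of those edges of $K$ whose two endpoints do not lie in a common member of $\cH$. Since contracting vertex-disjoint subgraphs does not identify edges lying outside them, $\overline K$ is a genuine sub-multigraph of $\overline G$, and a direct count gives
\[ v(\overline K)=(a+b)-\sum_{H}(x_H+y_H-1),\qquad e(\overline K)=ab-\sum_{H}x_Hy_H, \]
the sums ranging over all $H\in\cH$ meeting $K$.

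The structural input I would use is twofold: a union of at most $a-1$ forests contains no two vertices joined by $a$ or more parallel edges (two parallel edges already form a cycle), and every one of its sub-multigraphs $\overline H$ satisfies $e(\overline H)\leq (a-1)(v(\overline H)-1)$ (a forest on $k$ vertices has at most $k-1$ edges). The first fact forces $x_H\leq a-1$ for every relevant $H$: if $A_0\seq V(H)$, then since $H$ is $K_{a,b}$-free we can pick $v\in B_0\setminus V(H)$, and the $a$ edges of $K$ from $v$ to $A_0$ collapse to $a$ parallel edges between the image of $v$ and the contraction of $H$ in $\overline G$ — impossible. The second fact, applied to $\overline H=\overline K$, yields $ab-\sum_H x_Hy_H\leq (a-1)\bigl((a+b-1)-\sum_H(x_H+y_H-1)\bigr)$, which after rearranging becomes
\[ b-(a-1)^2 \ \leq\ \sum_{H}\bigl(x_Hy_H-(a-1)(x_H+y_H-1)\bigr). \]
It then remains to check that every summand on the right is nonpositive, i.e.\ $x_Hy_H\leq (a-1)(x_H+y_H-1)$ whenever $0\leq x_H\leq a-1$ and $x_H+y_H\geq 1$: if $x_H=0$ then $y_H\geq 1$ and the right side equals $(a-1)(y_H-1)\geq 0$; if $x_H\geq 1$ then $x_Hy_H\leq (a-1)y_H\leq (a-1)(x_H+y_H-1)$. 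Hence the right-hand side of the displayed inequality is $\leq 0$, while its left-hand side is $b-(a-1)^2\geq 1$ by the hypothesis $b\geq (a-1)^2+1$, a contradiction.

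I expect the main obstacle to be identifying precisely this rearrangement. The naive bounds — controlling the number of contracted subgraphs meeting $K$, or the number of "free" vertices of $K$, separately — are off by an additive $\Theta(a)$ and do not close the gap; it is only by packaging the loss contributed by each $H$ into the single quantity $x_Hy_H-(a-1)(x_H+y_H-1)$ that its nonpositivity becomes transparent, and this also uses $b\geq (a-1)^2+1$ in the sharpest possible way, the threshold $(a-1)^2$ being exactly the value of $b$ at which $K_{a,b}$ ceases to be a union of $a-1$ forests. The only other point that needs care is the elementary bookkeeping that $\overline K$ is indeed a sub-multigraph of $\overline G$ with the stated vertex and edge counts, and the observation (needed only for the bound $x_H\leq a-1$) that a union of $a-1$ forests has edge multiplicity at most $a-1$.
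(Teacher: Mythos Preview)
Your proof is correct and takes a genuinely different route from the paper's. Both begin by assuming a copy of $K_{a,b}$ with sides $A$ and $B$ exists, and both use the multiplicity bound on $\Gbar$ to show that no $H\in\cH$ can contain all of $A$. From there they diverge. The paper constructs an auxiliary edge-colored multigraph $P$ on the vertex set $A$: for each $v\in B$ it adds one edge to $P$, colored $i$ according to a forest $F_i$ that links two $A$-vertices through $v$; since $|B|\geq (a-1)^2+1$, pigeonhole places at least $a$ edges of one color on only $a$ vertices, forcing a monochromatic cycle in $P$, which unwinds to a cycle in some $F_i$ in $\Gbar$. You instead pass directly to the contracted image $\overline{K}\seq\Gbar$ and invoke the global density bound $e(\overline{K})\leq (a-1)(v(\overline{K})-1)$; the threshold on $b$ then falls out of the identity $ab-(a-1)(a+b-1)=b-(a-1)^2$ combined with the termwise inequality $x_Hy_H\leq (a-1)(x_H+y_H-1)$. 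Your counting argument is slicker---it avoids the case distinction in building $P$ and makes the sharpness of $b\geq (a-1)^2+1$ transparent---while the paper's argument has the virtue of exhibiting an explicit cycle in a specific forest, though that extra information is not used downstream.
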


\TM{Man beachte, dass die Lower Bound $b\geq (a-1)^2+1$ bestmöglich ist.}

\begin{proof}
Suppose for the sake of contradiction that $G$ contains a $K_{a,b}$ as a subgraph. We denote by $A$ and $B$, $|A|=a$, $|B|=b$, the sets of vertices of $G$ corresponding to the two partition classes.

First observe that at most $a-1$ vertices from the set $A$ are contained in the same subgraph $H\in\cH$: Otherwise, as $H$ is $K_{a,b}$-free, at least one vertex $v$ from $B$ would not belong to $H$, and $a$ edges from $v$ would lead to vertices of $H$, i.e., there were $a$ parallel edges in $\Gbar$, contradicting the fact that $\Gbar$ is the union of at most $a-1$ forests.

We now define an auxiliary edge-colored multigraph $P$ on the vertex set $A$, where for each vertex $v\in B$, we add exactly one edge to $P$ as follows: We first consider the case that $v$ is contained in some graph $H\in\cH$ that also has some vertex $u$ in common with the set $A$. In this case, by the above observation there is a vertex $u'\in A$ which does not belong to $H$. Clearly, the edge $\{u',v\}$ belongs to some forest $F_i$, $i\in\{1,\ldots,a-1\}$. We then add the edge $\{u',u\}$ in color $i$ to $P$. The second case is that $v$ is not contained in a graph $H\in\cH$ which has vertices in common with the set $A$ (either because $v$ is not contained in \emph{any} $H\in\cH$ or because $v$ is contained in some $H\in\cH$, but $V(H)\cap A=\emptyset$). In this case each of the edges between $v$ and the vertices in $A$ belongs to one of the forests $F_1,\ldots,F_{a-1}$. By the pigeonhole principle at least two edges from the same forest $F_i$ lead to vertices in the set $A$. We pick two such vertices $u,u'\in A$ and add the edge $\{u,u'\}$ in color $i$ to $P$.

Observe that by construction of the multigraph $P$, we have that if $P$ contains a monochromatic cycle, then the edges of one of the graphs $F_i$, $i\in\{1,\ldots,a-1\}$, form a cycle in $\Gbar$.

As the number of vertices in $B$ is at least $(a-1)^2+1$, $P$ contains at least this many edges. By the pigeonhole principle at least $a$ of them have the same color. As $P$ has only $a$ vertices, it follows that those $a$ edges of the same color must form a cycle in $P$, implying that the edges of one of the graphs $F_i$, $i\in\{1,\ldots,a-1\}$, form a cycle in $\Gbar$, contradicting the fact that all those graphs are forests.
\end{proof}

\begin{proof}[Proof of Proposition~\ref{prop:nw-bipartite}]
Let $G$ be a graph with $m_1(G,R(K_{a,b},r))\leq r(a-1)$. By Lemma~\ref{lem:contraction} there is a family $\cH$ of vertex-disjoint subgraphs of $G$ such that any $H\in\cH$ satisfies $v(H)<R(K_{a,b},r)$ and the multigraph $\Gbar$ obtained from $G$ by contracting every $H\in\cH$ into a single vertex satisfies $m_1(\Gbar)\leq r(a-1)$.

Therefore, using that the subgraphs $H\in\cH$ of $G$ are vertex-disjoint and that $v(H)<R(K_{a,b},r)$ holds for each of them, we can partition the edges contained in all those subgraphs into $r$ sets $E_1,\ldots,E_r$, such that none of those edge sets contains a $K_{a,b}$ as a subgraph.

Furthermore, using that $m_1(\Gbar)\leq r(a-1)$ we can apply Theorem~\ref{thm:nash-williams} to partition the edges of $\Gbar$ into at most $r(a-1)$~forests. We denote the corresponding forests in $G$ by $F_{i,j}$, $1\leq i\leq r$, $1\leq j\leq a-1$. Applying Lemma~\ref{lem:union-bipartite-free} shows that $E_1\cup \bigcup_{j=1}^{a-1} F_{1,j},\ldots,E_r\cup \bigcup_{j=1}^{a-1} F_{r,j}$ is a $K_{a,b}$-free partition of the edges of~$G$, as desired.
\end{proof}

\begin{proof}[Proof of Theorem~\ref{thm:bicliques}: lower bound]
Let $G$ be a graph with $m(G)<r(a-1)-\frac{r(a-1)-1}{\max\{R(K_{a,b},r),2r(a-1)+1\}}$. By Lemma~\ref{lem:min-m-G} we have $m_1(G,R(K_{a,b},r))\leq r(a-1)$. Applying Proposition~\ref{prop:nw-bipartite} shows that there is a coloring of the edges of $G$ with $r$ colors that avoids monochromatic copies of $K_{a,b}$.
\end{proof}

\section{Proof of Theorem~\texorpdfstring{\ref{thm:paths}}{3}}
\label{sec:paths}

\subsection{Lower Bound}

We will apply Nash-Williams' arboricity theorem (Theorem~\ref{thm:nash-williams}) to prove the following proposition.

\begin{proposition}[Partition into $P_\ell$-free graphs]
\label{prop:nw-paths}
Let $\ell\geq 3$ and $r\geq 2$ be integers. Any graph $G$ satisfying $m_1(G,R(P_\ellthird,r))\leq \rhalf$ can be partitioned into $r$ graphs which contain no $P_\ell$ as a subgraph.
\end{proposition}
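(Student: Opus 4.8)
The plan is to follow the template of Proposition~\ref{prop:nw-bipartite}: contract away the ``dense spots'' of $G$, colour them by brute force using Ramsey's theorem, and handle the sparse remainder with Nash-Williams' theorem (Theorem~\ref{thm:nash-williams}). The only genuinely new ingredient is an analogue of Lemma~\ref{lem:union-bipartite-free} tailored to paths, for which we use the elementary fact that a \emph{star forest} (every component a star) contains no $P_3$, hence no $P_\ell$ for $\ell\geq 3$, and, more importantly, that a connected \emph{trail} (walk with pairwise distinct edges) inside a star forest uses at most two edges.

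\emph{Setup and colouring.} Given $G$ with $m_1(G,R(P_\ellthird,r))\leq\rhalf$, apply the Contraction Lemma (Lemma~\ref{lem:contraction}) with $k:=R(P_\ellthird,r)$ and the integer $\rhalf$ in place of $r$ to obtain a family $\cH$ of vertex-disjoint subgraphs of $G$, each with $v(H)<R(P_\ellthird,r)$, such that the multigraph $\Gbar$ obtained by contracting every $H\in\cH$ satisfies $m_1(\Gbar)\leq\rhalf$. Since $v(H)<R(P_\ellthird,r)$, each $H\in\cH$ has an $r$-edge-colouring with no monochromatic $P_\ellthird$; as the members of $\cH$ are vertex-disjoint these combine into $r$ edge sets $E_1,\dots,E_r$ covering all edges inside the $H$'s, with $E_i\cap E(H)$ containing no $P_\ellthird$ for each $i$ and each $H\in\cH$. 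On the other side we partition $E(\Gbar)$ into $r$ star forests: by Theorem~\ref{thm:nash-williams} the condition $m_1(\Gbar)\leq\rhalf$ lets us partition $E(\Gbar)$ into $\rhalf$ forests, and every forest splits into two star forests (root each tree and colour an edge by the parity of the depth of its endpoint nearer the root). When $r$ is even this gives exactly $r$ star forests. When $r$ is odd it gives $r+1$, one too many, and one must instead argue directly that a (multi)graph with $m_1\leq\rhalf$ admits a partition of its edges into $r$ graphs none of which contains a $P_3$ --- this bookkeeping, which sharpens the off-the-shelf bound ``star arboricity $\le$ twice arboricity'' in the odd case, is the delicate point. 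Either way we obtain $E(\Gbar)=J_1\dcup\cdots\dcup J_r$ with no $J_i$ containing a $P_3$; write $J_i$ also for the corresponding set of (non-contracted) edges of $G$.

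\emph{The combining lemma (analogue of Lemma~\ref{lem:union-bipartite-free}).} We claim each $G_i:=E_i\cup J_i\seq G$ is $P_\ell$-free; since $G_1,\dots,G_r$ partition $E(G)$ this finishes the proof. Suppose $P$ is a $P_\ell$ inside some $G_i$. Removing from $P$ the edges lying in $J_i$ (equivalently, the edges not internal to a member of $\cH$) splits $P$ into sub-paths, each contained in a single $H\in\cH$ and hence --- being a path in $E_i\cap E(H)$ --- having at most $\ellthird-1$ edges. The removed edges, read off $P$ in order, form a connected trail in $\Gbar$ (two consecutive removed edges meet at the contracted vertex of the member of $\cH$ that $P$ traverses between them) using pairwise distinct edges of $\Gbar$, and this trail lies inside $J_i$. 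A connected trail inside a $P_3$-free forest is contained in one of its stars and therefore has at most two edges. Hence $P$ is the concatenation of at most three sub-paths with at most $\ellthird-1$ edges each, joined by at most two edges, so $P$ has at most $3(\ellthird-1)+2=3\ellthird-1$ edges; with $\ellthird=\lfloor\ell/3\rfloor$ this is at most $\ell-1<\ell$, a contradiction. Thus no $G_i$ contains a $P_\ell$.

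\emph{Main obstacle.} The step that needs real care is the odd-$r$ case of the colouring of $\Gbar$: Nash-Williams plus star decomposition yields $r+1$ rather than $r$ colour classes, so one has to produce the $P_3$-free $r$-partition of $\Gbar$ by a finer, direct argument that uses the precise inequality $m_1(\Gbar)\leq\rhalf$ (and still delivers classes without a $P_3$, so as not to worsen the constant in the combining step). Everything else is a routine adaptation of the arguments for cycles and complete bipartite graphs given above, and the lower bound in Theorem~\ref{thm:paths} then follows exactly as in the cycle and biclique cases by combining this proposition with Lemma~\ref{lem:min-m-G}.
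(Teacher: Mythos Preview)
Your argument is essentially identical to the paper's: contract the dense spots via Lemma~\ref{lem:contraction}, colour each $H\in\cH$ by Ramsey for $P_{\ellthird}$, apply Theorem~\ref{thm:nash-williams} to $\Gbar$ to get $\rhalf$ forests, split each forest into two star forests, and then observe that a path in $E_i\cup F_i$ can use at most two edges from the star forest $F_i$ (your trail argument is just a careful spelling-out of the paper's one-line parenthetical), hence has length at most $3(\ellthird-1)+2=3\ellthird-1<\ell$.

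The one point where you diverge is the flagged ``main obstacle'' for odd $r$. In the paper this is a non-issue: $\rhalf$ stands for $\lfloor r/2\rfloor$, so splitting the $\rhalf$ forests yields $2\lfloor r/2\rfloor\leq r$ star forests, and one simply lets $F_r$ be empty when $r$ is odd. Your worry comes from reading $\rhalf$ as $\lceil r/2\rceil$; with the intended reading there is nothing delicate here, and your combining step goes through exactly as written.
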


The proof of Proposition~\ref{prop:nw-paths} is very similar to the proof of Proposition~\ref{prop:nw-cycles} presented in Section~\ref{sec:cycles}. The idea is to first partition the edges of $G$ into $\rhalf$ forests, and then to split each of those forests again into two forests of stars.

\begin{proof}[Proof of Proposition~\ref{prop:nw-paths}]
Let $G$ be a graph with $m_1(G,R(P_\ellthird,r))\leq \rhalf$. By Lemma~\ref{lem:contraction} there is a family $\cH$ of vertex-disjoint subgraphs of $G$ such that any $H\in\cH$ satisfies $v(H)<R(P_\ellthird,r)$ and the multigraph $\Gbar$ obtained from $G$ by contracting every $H\in\cH$ into a single vertex satisfies $m_1(\Gbar)\leq \rhalf$.

Therefore, using that the subgraphs $H\in\cH$ of $G$ are vertex-disjoint and that $v(H)<R(P_\ellthird,r)$ holds for each of them, we can partition the edges contained in all those subgraphs into $r$ sets $E_1,\ldots,E_r$, such that none of those edge sets contains a $P_\ellthird$ as a subgraph.

Furthermore, using that $m_1(\Gbar)\leq \rhalf$ we can apply Theorem~\ref{thm:nash-williams} to partition the edges of $\Gbar$ into at most $\rhalf$~forests. By splitting each of those forests into two forests of stars in $\Gbar$, we obtain a partition of the corresponding edges of $G$ into star forests $F_1,\ldots,F_r$. We claim that $E_1\cup F_1,\ldots,E_r\cup F_r$ is a $P_\ell$-free partition of the edges of~$G$. To see this note that any path in $G$ within one of the edge sets $E_i\cup F_i$, $i\in\{1,\ldots,r\}$, can contain at most 2~edges from $F_i$ (the edges in $F_i$ form a star forest in $\Gbar$!). Those two edges connect at most 3 paths of length at most $\ellthird-1$ from $E_i$, showing that the total length of such a path is bounded by $2+3(\ellthird-1)=3\ellthird-1<\ell$.
\end{proof}

\begin{proof}[Proof of Theorem~\ref{thm:paths}: lower bound]
Let $G$ be a graph with $m(G)<\rhalf-\frac{\rhalf-1}{\max\{R(P_\ellthird,r),2\rhalf+1\}}$. By Lemma~\ref{lem:min-m-G} we have $m_1(G,R(P_\ellthird,r))\leq \rhalf$. Applying Proposition~\ref{prop:nw-paths} shows that there is a coloring of the edges of $G$ with $r$ colors that avoids monochromatic copies of $P_\ell$.
\end{proof}

\subsection{Upper Bound}

In this section we will construct a sparse $(P_\ell,r)$-Ramsey graph by using the bipartite graph $G=G(n,k,m)=(N\dcup M,E)$ defined in~\eqref{eq:G-n-k-m} as a building block. For any $k$-element subset $A\seq N$ we define $M(A):=A\times[m]\seq M$, and for any $B\seq M(A)$ we denote by $G[A\cup B]$ the (complete bipartite) subgraph of $G$ induced by the vertices in $A$ and $B$.

We call a coloring of the edges of a complete bipartite graph with vertex partition $A$ and $B$ an \emph{$A$-centered star coloring}, if each color class induces a star with its center at a vertex in $A$ and $|B|$ many rays. Note that in such a coloring, $|A|$ many different colors occur, and every vertex in $B$ is incident to edges in all those colors.

The next lemma states that any $r$-coloring of the edges of a large enough $G(n,k,m)$ that contains no monochromatic copies of $P_\ell$ must contain a star colored complete bipartite graph as a subgraph.

For integers $\ell\geq 3$, $s\geq 1$, $k\geq 2$ and $r\geq k$ we define
\begin{subequations} \label{eq:def-n-m}
\begin{align}
  n &= n(\ell,k,r):=R(\underbrace{P_\ellhalf,\ldots,P_\ellhalf}_{\text{$r$ times}},K_k) \enspace, \label{eq:def-n} \\
  m &= m(s,k,r):= \binom{r}{k}k!s \enspace, \label{eq:def-m}
\end{align}
\end{subequations}
where $R(G_1,\ldots,G_{r+1})$ denotes the generalized Ramsey number w.r.t.\ the graphs $G_1,\ldots,G_{r+1}$, i.e., the smallest integer $N=N(G_1,\ldots,G_{r+1})$ such that any $(r+1)$-coloring of the edges of $K_N$ contains a copy of $G_i$ in color~$i$ for some $i\in\{1,\ldots,r+1\}$.

\begin{lemma}[$G(n,k,m)$ contains a star colored subgraph]
\label{lem:star-subgraph}
Let $\ell\geq 3$, $s\geq 1$, $k\geq 2$ and $r\geq k$ be integers. Then for $n=n(\ell,k,r)$ and $m=m(s,k,r)$ as defined in~\eqref{eq:def-n-m} the graph $G=G(n,k,m)=(N\dcup M,E)$ defined in~\eqref{eq:G-n-k-m} has the following property: For \emph{any} $r$-coloring of its edges with no monochromatic copies of $P_\ell$ there is a $k$-element subset $A\seq N$ and an $s$-element subset $B\seq M(A)\seq M$ such that the coloring of the subgraph $G[A\cup B]$ is an $A$-centered star coloring.
\end{lemma}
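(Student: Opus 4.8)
The plan is to fix an arbitrary $r$-coloring of the edges of $G=G(n,k,m)$ with no monochromatic $P_\ell$, and first locate a suitable $k$-element set $A\subseteq N$ inside which every pair of vertices ``behaves well'', and then use the abundance of clones in $M(A)$ (there are $m=\binom{r}{k}k!s$ of them) to extract $s$ vertices that all induce the same $A$-centered star coloring on $G[A\cup B]$. The key structural fact is: for any single vertex $(S,v)\in M$ with neighborhood $S\in\binom{N}{k}$, the $k$ edges from $(S,v)$ to $S$ receive some colors; if two of these edges had the same color~$c$, together with any other $c$-colored edge touching $S$ we could grow a long monochromatic path. So the first task is to understand, via Ramsey's theorem on $K_n$ with $n=R(P_{\lceil \ell/2\rceil},\ldots,P_{\lceil\ell/2\rceil},K_k)$ (with $r$ copies of $P_{\lceil\ell/2\rceil}$ and one $K_k$), what color pattern is forced on the edges of $N=[n]$.

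First I would define an auxiliary $(r+1)$-coloring of the edges of $K_n$ on vertex set $N$: I need a coloring of the pair $\{x,y\}\subseteq N$ that records ``the'' color class of $G$ that the edge-pair $\{x,y\}$ lives in, in a way that makes monochromatic $P_{\lceil\ell/2\rceil}$'s in color~$i$ of $K_n$ correspond to (halves of) monochromatic $P_\ell$'s of color~$i$ in $G$. Concretely: for a pair $\{x,y\}\subseteq N$, pick the vertex $(\{x,y,\dots\},v)$ — but we need a $k$-set; the cleaner route is to only color pairs after we have already found a candidate $k$-set, so instead I would first apply Ramsey's theorem in the form $n=R(P_{\lceil\ell/2\rceil},\dots,P_{\lceil\ell/2\rceil},K_k)$ to a coloring where each pair in $N$ is colored by a color avoided by all its incident $M$-vertices if possible, else by a color that appears. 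The $(r+1)$-st color (the $K_k$-color) will mark pairs $\{x,y\}$ such that \emph{every} color is used ``too much'' near $\{x,y\}$; a $K_k$ in that color gives the desired set~$A$. Since no monochromatic $P_\ell$ exists in $G$, one checks that no monochromatic $P_{\lceil\ell/2\rceil}$ can appear in colors $1,\dots,r$ of $K_n$ (a path of that length in the auxiliary graph lifts, through the $M$-vertices, to a path of roughly twice the length in $G$ of the same color). Hence by the choice of $n$ we are forced into the $K_k$-color, yielding a $k$-set $A\subseteq N$.

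Given such an $A$, the point is that for every vertex $(A,v)\in M(A)$ the $k$ edges to $A$ must all receive \emph{distinct} colors (if two coincided, the $K_k$-color marking of the pair plus a third same-colored edge, or directly two same-colored edges sharing the vertex $(A,v)$ together with further edges through other clones, yields a monochromatic $P_\ell$; this is where $b\ge$ the relevant threshold and the length bookkeeping $2+3(\lceil\ell/3\rceil-1)<\ell$-style estimates come in, adapted to $\lceil\ell/2\rceil$). So each $(A,v)$ induces an injection $A\to\{1,\dots,r\}$, i.e.\ one of at most $\binom{r}{k}k!$ possible ``star patterns''. Since $|M(A)|=m=\binom{r}{k}k!s$, the pigeonhole principle gives $s$ vertices in $M(A)$ realizing the \emph{same} pattern; call this set $B$. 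Then every color class of $G[A\cup B]$ is exactly a star centered at some $a\in A$ with $|B|$ rays, which is precisely an $A$-centered star coloring, completing the proof.

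The main obstacle I expect is the bookkeeping in the lifting argument: one must verify carefully that a monochromatic path of length $\lceil\ell/2\rceil$ in the auxiliary $K_n$, or a repeated color among the $k$ edges from a single $M$-vertex, really does assemble (by routing through distinct clones in $M$) into a monochromatic $P_\ell$ in $G$, so that the hypothesis ``no monochromatic $P_\ell$'' is genuinely contradicted. Getting the constants in $n=n(\ell,k,r)$ and $m=m(s,k,r)$ exactly right — in particular why $P_{\lceil\ell/2\rceil}$ (each half of the target path, plus one connecting vertex) and why the factor $\binom{r}{k}k!$ — is the delicate part; the Ramsey-theoretic and pigeonhole skeleton is otherwise routine.
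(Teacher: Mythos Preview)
Your overall two-part strategy --- (i) use the generalized Ramsey number on an auxiliary $(r+1)$-coloring of $K_n$ to locate a $k$-set $A\subseteq N$ for which every vertex of $M(A)$ sees $k$ distinct colors, then (ii) pigeonhole over the $\binom{r}{k}k!$ possible color patterns on $M(A)$ to extract $B$ --- is exactly the paper's, and your part~(ii) is correct and identical to the paper's second part.

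Part~(i), however, has a genuine gap. You propose to color each pair $\{x,y\}\subseteq N$ \emph{directly}: give it some color $c\in\{1,\dots,r\}$ witnessed by a $k$-set $S\ni x,y$ and a vertex $v\in M(S)$ with both edges $\{v,x\},\{v,y\}$ in color~$c$, and color $r+1$ otherwise. The problem is the lifting step. If $a_0 a_1\cdots a_{\lceil\ell/2\rceil}$ is a monochromatic path in color~$c$ in $K_n$, each edge $\{a_{i-1},a_i\}$ carries a witness $v_i\in M(S_i)$, but nothing in your construction prevents $S_i=S_j$ and $v_i=v_j$ for $i\neq j$; then the ``lift'' $a_0\, v_1\, a_1\, v_2\, a_2\cdots$ is not a path in $G$ (a repeated $v$ collapses two steps to a star). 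Your own caveat about ``routing through distinct clones'' names precisely this obstacle, but the direct coloring you describe does not resolve it. The aside invoking $b$ and the $\lceil\ell/3\rceil$ arithmetic is imported from unrelated arguments elsewhere in the paper and plays no role here.

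The paper sidesteps the collision issue by building the auxiliary graph $P$ \emph{iteratively} inside a proof by contradiction. One assumes that \emph{no} $k$-subset of $N$ is colorful; then, as long as $P$ still has an independent $k$-set $A$, one picks a non-colorful witness $v\in M(A)$, reads off the repeated color on two edges $\{a_1,v\},\{a_2,v\}$, and adds the single edge $\{a_1,a_2\}$ to $P$ in that color; once no independent $k$-set remains, the rest of $P$ is completed in color $r+1$. Since each iteration uses a \emph{fresh} $k$-set $A$ (it stops being independent the moment an edge is placed inside it), the witness vertices lie in pairwise disjoint sets $M(A)$ and are therefore automatically distinct. This greedy device is the missing idea: it is what guarantees that the lift $a_0\, v_1\, a_1\, v_2\, a_2\cdots$ is a genuine monochromatic path of length $2\lceil\ell/2\rceil\ge\ell$ in $G$, forcing the $K_k$ alternative in the Ramsey step and producing the colorful set~$A$.
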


The proof of Lemma~\ref{lem:star-subgraph} proceeds by repeatedly applying the pigeonhole principle.

\begin{proof}
We fix an $r$-coloring of the edges of $G$ with no monochromatic copies of $P_\ell$ and show that we can find the desired star colored subgraph.
For each $k$-element subset $A\seq N$ we call the subgraph $G[A\cup M(A)]$ \emph{colorful} if for each vertex in $M(A)$, all the $k$ edges incident to it have a different color (note that even if $G[A\cup M(A)]$ is colorful this coloring is not necessarily an $A$-centered star coloring).

The proof consists of two parts. In the first part we prove that there is at least one $k$-element subset $A\seq N$ such that $G[A\cup M(A)]$ is colorful. In the second part we prove that there is an $s$-element subset $B\seq M(A)\seq M$ such that the coloring on the subgraph $G[A\cup B]$ is an $A$-centered star coloring.

Suppose for the sake of contradiction that for every $k$-element subset $A\seq N$, the graph $G[A\cup M(A)]$ is \emph{not} colorful. Then we iteratively construct an auxiliary edge-colored complete graph $P$ on the vertex $N$ as follows: Initially, $P$ has no edges. As long as $P$ has an independent set of size $k$, we pick one such set $A$, and we pick a vertex $v\in M(A)$ in $G$ with two incident edges $\{a_1,v\}$ and $\{a_2,v\}$ of the same color (such a vertex exists as $G[A\cup M(A)]$ is not colorful), and we add the edge $\{a_1,a_2\}$ in this color to $P$. If $P$ has no independent set of size $k$ anymore, we add all the remaining non-edges to $P$ and assign them an additional $(r+1)$-st color. Note that by our construction, $P$ contains no clique of size $k$ in color $r+1$. By the definition in \eqref{eq:def-n} (recall that $|N|=n$), $P$ therefore must contain a monochromatic path of length $\ellhalf$ in one of the colors $1,\ldots,r$. But this path clearly corresponds to a monochromatic path of length $\ell$ in $G$, contradicting our assumption that the coloring of $G$ contains no monochromatic copies of $P_\ell$. This completes the first part of the proof.

For the second part we fix some $k$-element subset $A\seq N$ such that $G[A\cup M(A)]$ is colorful. Note that each vertex in $M(A)$ has $k$ edges in $k$ different colors incident to it. As the total number of colors is $r$, there are $\binom{r}{k}$ different possible color sets that can be incident to a vertex in $M(A)$. By the pigeonhole principle and the definition in~\eqref{eq:def-m}, it follows that there is a subset $B'\seq M(A)$ of size at least $k!s$ such that the colors incident to a vertex in $B'$ are the same for all vertices in $B'$. W.l.o.g.\ we assume that those are the colors $1,\ldots,k$. Now we focus on the vertices in the set $B'$. Fix some ordering of the vertices $a_1,\ldots,a_k$ in $A$, and for each vertex $b\in B'$ we consider the order in which the colors $1,\ldots,k$ appear on the edges $\{a_1,b\},\ldots,\{a_k,b\}$. Clearly, there are $k!$ different possible orders, and by the pigeonhole principle there must be a subset $B\seq B'$ of size at least $s$ such that the order of the colors is the same for all the vertices in $B$. It follows that the coloring on the subgraph $G[A\cup B]$ is an $A$-centered star coloring.
\end{proof}

We now define a huge graph $G^*=G^*(\ell,k,r)$ by repeatedly gluing together copies of the graph $G(n,k,m)$ defined in~\eqref{eq:G-n-k-m}. We later show that $G^*$ satisfies $m(G^*)<k$ and that this graph is $(P_\ell,r)$-Ramsey for a suitable choice of $k$ (see Lemma~\ref{lem:gstar-sparse} and Lemma~\ref{lem:gstar-Ramsey} below). To show the Ramsey property we will repeatedly apply Lemma~\ref{lem:star-subgraph} to find star colored subgraphs of $G^*$.
For the reader's convenience the following definition is illustrated in Figure~\ref{fig:gstar}.

\begin{figure}
\centering
\PSforPDF{
 \psfrag{n1}{$n_1$}
 \psfrag{n2}{$n_2$}
 \psfrag{s1}{$s_1$}
 \psfrag{sim1}[l][c][1][90]{$s_{i-1}=n_i$}
 \psfrag{si}[l][c][1][90]{$s_i=n_{i+1}$}
 \psfrag{sip1}[l][c][1][90]{$s_{i+1}=n_{i+2}$}
 \psfrag{sell}[l][c][1][90]{$s_\ellhalf=1$}
 \psfrag{m1}[l][c][1][90]{$m_1$}
 \psfrag{mim1}[l][c][1][90]{$m_{i-1}$}
 \psfrag{mi}[l][c][1][90]{$m_i$}
 \psfrag{mip1}[l][c][1][90]{$m_{i+1}$}
 \psfrag{mell}[l][c][1][90]{$m_\ellhalf$}
 \psfrag{k}{$k$} 
 \psfrag{aa}{$A$}
 \psfrag{mti}{$M\in T_i$}
 \psfrag{mip1a}{$M_{i+1}(A)$}
 \psfrag{g1}{$G(n_1,k,m_1)$}
 \psfrag{gi}[l][c][1][45]{$G(n_i,k,m_i)$}
 \psfrag{gip1}[l][c][1][45]{$G(n_{i+1},k,m_{i+1})$}
 \psfrag{gell}[l][c][1][45]{$G(n_\ellhalf,k,m_\ellhalf)$}
 \psfrag{gmi}{$G(m_{i-1},k,m_i)$}
 \psfrag{gmip1}{$G(m_i,k,m_{i+1})$}
 \psfrag{gmell}{$G(m_{\ellhalf-1},k,m_\ellhalf)$}
 \psfrag{ldots}{$\ldots$}
 \psfrag{vdots}{$\vdots$}
 \psfrag{gg1}{\Large $G_1$}
 \psfrag{ggi}{\Large $G_i$}
 \psfrag{ggip1}{\Large $G_{i+1}$}
 \psfrag{gstar}{\Large $G^*=G_\ellhalf$}
 \psfrag{a1}{$A_1=\{a_1,\ldots,a_k\}$}
 \psfrag{b1j}{$B_{1,j},\; 1\leq j\leq k+1$}
 \psfrag{b1}{$B_1=\bigcup\limits_{1\leq j\leq k+1} B_{1,j}$}
 \psfrag{bij}{$B_{i,j}$}
 \psfrag{bip1j}{$B_{i+1,j}$}
 \psfrag{aip1j}{$A_{i+1,j}$}
 \includegraphics{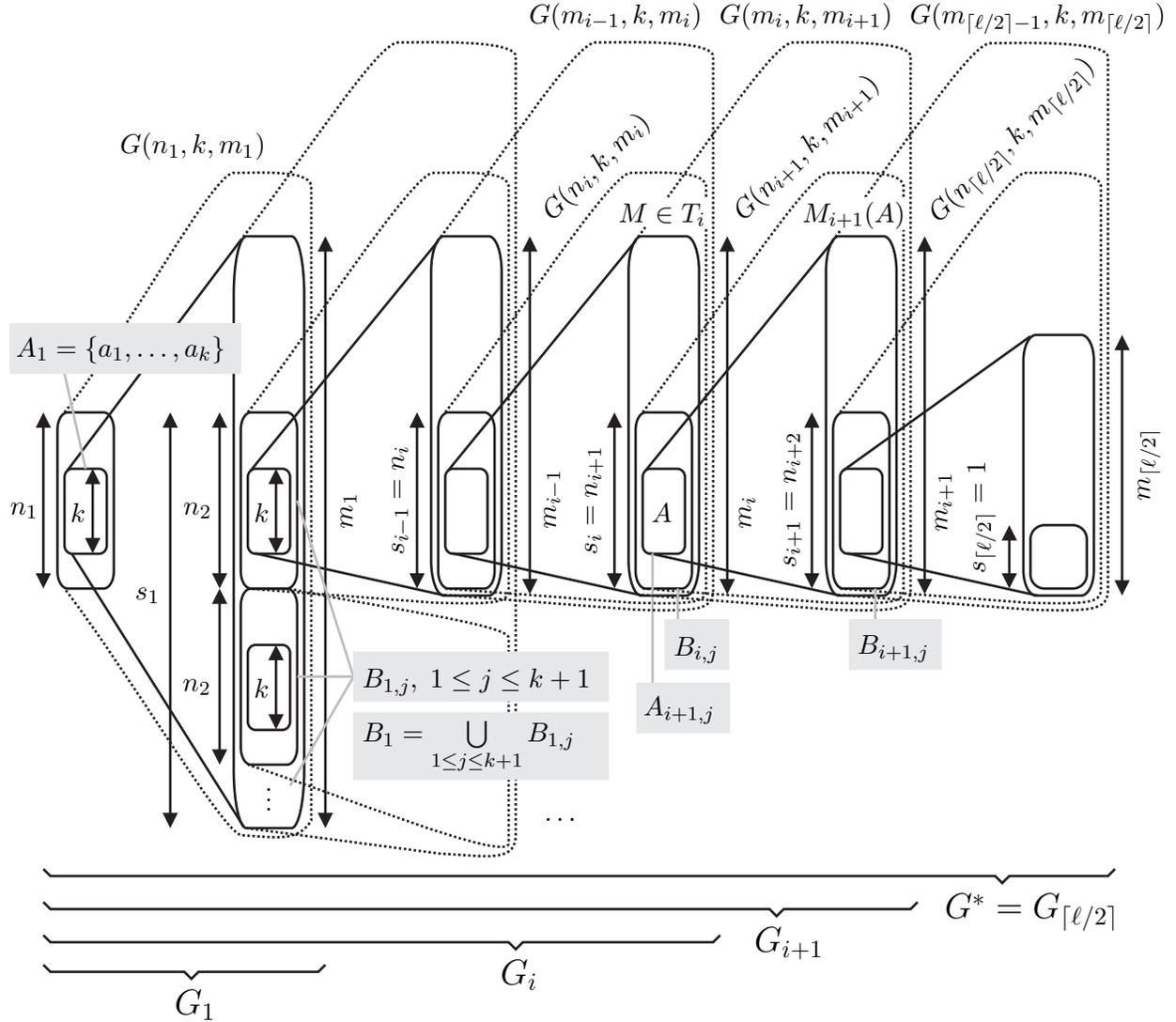}
}
\caption{Illustration of Definition~\ref{eq:def-gstar} and notations used in the proof of Lemma~\ref{lem:gstar-Ramsey}. The dotted lines represent copies of the graphs $G(n,k,m)$ for different values of $n$ and $m$. The variables in the grey boxes are only used in the proof of Lemma~\ref{lem:gstar-Ramsey}.} \label{fig:gstar}
\end{figure}

\begin{definition}
\label{eq:def-gstar}
Let $\ell\geq 3$, $k\geq 2$ and $r\geq k$ be fixed integers. In the following we define the graph $G^*=G^*(\ell,k,r)$. The definition proceeds in two steps.

We first define sequences of integers $n_i=n_i(\ell,k,r)$, $s_i=s_i(\ell,k,r)$ and $m_i=m_i(\ell,k,r)$ for $i=1,\ldots,\ellhalf$ as follows: Let $n_i:=n(\ell,k,r)$ for all $i=1,\ldots,\ellhalf$ where $n()$ is defined in~\eqref{eq:def-n} (so we have $n_1=\cdots=n_\ellhalf$). Furthermore, let $s_1:=(k+1)\cdot n_2$, $s_i:=n_{i+1}$ for $i=2,\ldots,\ellhalf-1$, and $s_\ellhalf:=1$. Then define $m_i:=m(s_i,k,r)$ for $i=1,\ldots,\ellhalf$ with $m()$ as defined in~\eqref{eq:def-m} (note that we have $s_1\geq s_2=\cdots=s_{\ellhalf-1}\geq s_\ellhalf$ and therefore $m_1\geq m_2=\cdots=m_{\ellhalf-1}\geq m_\ellhalf$).

Now we inductively define a sequence of graphs $G_1\seq \cdots\seq G_\ellhalf$ as follows:
First we set $G_1:=G(n_1,k,m_1)$ with $G(n_1,k,m_1)$ as defined in~\eqref{eq:G-n-k-m}. Similarly to before, for any $k$-element subset $A\seq [n_1]$ we denote by $M_1(A)$ the set of vertices in $G_1$ that are adjacent to all the vertices in $A$. Furthermore, we set $T_1:=\bigcup_{A\in\binom{[n_1]}{k}} \{M_1(A)\}$ ($T_1$ is a family of $m_1$-element sets).
For $i=1,\ldots,\ellhalf-1$ we construct $G_{i+1}$ from $G_i$ as follows: For every $M\in T_i$ ($M$ has size $m_i$) we glue a copy of $G(m_i,k,m_{i+1})$ onto the vertices in $M$ (such that the two vertex sets of size $m_i$ are identified). For any $k$-element subset $A$ of $M$ we define $M_{i+1}(A)$ as the set of vertices in this copy that are adjacent to all the vertices in $A$. Furthermore, we define $T_{i+1}:=\bigcup_{A\in\binom{M}{k} \wedge M\in T_i} \{M_{i+1}(A)\}$ ($T_{i+1}$ is a family of $m_{i+1}$-element sets).

Eventually we set $G^*:=G_\ellhalf$.
\end{definition}

\begin{lemma}[$G^*$ is sparse]
\label{lem:gstar-sparse}
The graph $G^*=G^*(\ell,k,r)$ from Definition~\ref{eq:def-gstar} satisfies $m(G^*)<k$.
\end{lemma}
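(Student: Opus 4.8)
The plan is to bound the $m$-density of $G^*$ by exhibiting, for every nonempty subgraph $K\subseteq G^*$, an explicit upper bound on $e(K)/v(K)$ that is strictly less than $k$. The key structural observation is that $G^*$ is built entirely by gluing together copies of the graphs $G(n,k,m)$ from~\eqref{eq:G-n-k-m} along \emph{vertex sets only} (the sets $M\in T_i$), never along edges. Consequently every edge of $G^*$ lives in exactly one such copy, and in each copy $G(n',k,m')$ the ``new'' vertices (those in the $M'$-side, of which there are $\binom{n'}{k}m'$) all have degree exactly $k$, while the ``old'' vertices (those in the $N'$-side, which are shared with the previous level) have their $G^*$-degree split among this copy and earlier copies.

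First I would set up the bookkeeping: classify the vertices of $G^*$ by the level $i\in\{1,\ldots,\ellhalf\}$ at which they are introduced. A vertex introduced at level $i$ (for $i\geq 2$) plays the role of an ``$M$-vertex'' in a copy of $G(m_{i-1},k,m_i)$ and possibly the role of an ``$N$-vertex'' in copies of $G(m_i,k,m_{i+1})$ created at level $i+1$; a vertex introduced at level $1$ is an $N$-vertex of $G_1=G(n_1,k,m_1)$ only; the vertices introduced at the last level $\ellhalf$ are leaves of degree $k$. Then, mimicking the proof of Lemma~\ref{lem:density-G-n-k-m}, for an arbitrary subgraph $K\subseteq G^*$ I would write $e(K)=\sum_C e(K\cap C)$ summed over the copies $C$ of the building blocks, and bound each term $e(K\cap C)$ by $k$ times the number of $M$-side vertices of $C$ that lie in $K$. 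Since the $M$-side of one copy is disjoint from the $M$-side of every other copy (each vertex is the $M$-vertex of at most one copy, namely the one at its own level), summing gives $e(K)\leq k\cdot(\text{number of vertices of }K\text{ that are }M\text{-vertices of some copy})\leq k\cdot v(K)$. To get the \emph{strict} inequality $m(G^*)<k$ one observes that $K$ must contain at least one vertex that is \emph{not} an $M$-vertex of any copy — indeed any nonempty edge-containing subgraph contains at least one vertex of positive degree, and one can always charge edges so that at least one endpoint is counted as a pure $N$-vertex (e.g.\ the level-$1$ vertices, or more simply: a subgraph with at least one edge has at least one vertex that is an $N$-vertex of the copy containing that edge and is not an $M$-vertex of that copy). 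Hence $e(K)<k\cdot v(K)$ for every nonempty $K$ with at least one edge, and trivially $e(K)/v(K)=0<k$ when $K$ is edgeless, giving $m(G^*)<k$.

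The main obstacle, and the only place requiring genuine care rather than routine counting, is making precise the claim that every vertex is an $M$-vertex of \emph{at most one} building-block copy, so that the per-copy bounds $e(K\cap C)\leq k\cdot|M\text{-side of }C\text{ in }K|$ add up without double-counting. This follows from the inductive construction in Definition~\ref{eq:def-gstar}: at step $i\to i+1$ the new vertices are introduced as the $M$-sides of the freshly glued copies of $G(m_i,k,m_{i+1})$, and are pairwise distinct across the different $M\in T_i$ and across the different $k$-subsets $A$, and the old vertices in the sets $M\in T_i$ are never re-created — they only gain new edges by being identified with an $N$-side. I would state this as a short preliminary claim, prove it by an easy induction on $i$, and then the density estimate is the one-line computation
\[
  \frac{e(K)}{v(K)}\leq\frac{k\cdot|\{v\in V(K): v\text{ is an }M\text{-vertex of some copy}\}|}{v(K)}<k,
\]
exactly as in Lemma~\ref{lem:density-G-n-k-m}.
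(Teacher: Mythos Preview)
Your approach is essentially the same as the paper's, just phrased in counting language instead of orientation language: the paper orients every edge from the $N$-side to the $M$-side of the unique copy containing it, giving an acyclic orientation with in-degree at most $k$, then invokes Lemma~\ref{lem:orientation} to conclude $m_1(G^*)\leq k$ and hence $m(G^*)<k$. Your ``charge each edge to its $M$-endpoint'' is exactly this orientation, and your disjointness claim is correct and proved as you indicate.

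There is, however, a genuine gap in your strict-inequality step. The displayed inequality
\[
  \frac{e(K)}{v(K)}\leq\frac{k\cdot|\{v\in V(K): v\text{ is an }M\text{-vertex of some copy}\}|}{v(K)}<k
\]
requires that $K$ contain at least one vertex which is \emph{not} an $M$-vertex of any copy, i.e., a vertex of $[n_1]$. This is false in general: $K$ can live entirely among level~$\geq 1$ vertices (take for instance a single edge between a level-$1$ and a level-$2$ vertex; both endpoints are $M$-vertices of some copy). Your fallback observation --- that some vertex is an $N$-vertex of the copy containing a chosen edge --- is true but does not help, since that vertex may still be an $M$-vertex of a \emph{different} copy and hence still counted in the numerator.

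The fix is to exploit the layered structure, which is precisely the acyclicity the paper uses. Let $j^*$ be the minimum level of a vertex in $K$. If $j^*=0$ you are done. If $j^*\geq 1$, every level-$j^*$ vertex $w\in K$ is an $M$-vertex of a unique level-$j^*$ copy $C$, but all edges of $C$ have their other endpoint at level $j^*-1\notin K$; hence $e(K\cap C)=0$ while $|M_C\cap V(K)|\geq 1$, and the per-copy inequality is strict there. Summing gives $e(K)\leq k\bigl(v(K)-|\{\text{level-}j^*\text{ vertices in }K\}|\bigr)<k\,v(K)$. Equivalently: your orientation is acyclic, so every nonempty $K$ has a source, whence $e(K)\leq k(v(K)-1)$, which is exactly $m_1(G^*)\leq k$ and then $m(G^*)<k$ follows.
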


For the proof of Lemma~\ref{lem:gstar-sparse} we use the following lemma, which follows immediately from Theorem~\ref{thm:nash-williams}, and which was used in similar form in~\cite{MR1194728}.

\begin{lemma}[Arboricity and orientations]
\label{lem:orientation}
Let $k\geq 1$ be an integer. The edges of a graph $G$ can be oriented acyclically such that the in-degree at each vertex is at most $k$ if and only if $m_1(G)\leq k$.
\end{lemma}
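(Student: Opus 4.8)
\emph{The plan is} to read Lemma~\ref{lem:orientation} as Nash--Williams' theorem (Theorem~\ref{thm:nash-williams}) phrased in terms of orientations, and then to make the translation explicit in both directions. I interpret ``orienting the edges of $G$ acyclically so that every vertex has in-degree at most $k$'' as: partitioning $E(G)$ into $k$ parts and orienting each part as a \emph{branching}, i.e.\ as a disjoint union of out-trees rooted at arbitrarily chosen vertices of their components. Such a part is acyclic and contributes in-degree at most $1$ to every vertex, so the superimposed orientation has in-degree at most $k$ everywhere; conversely, in any such acyclic in-degree-$\leq k$ orientation, grouping the incoming edges of each vertex by an in-degree label exhibits the orientation in exactly this shape, the $k$ parts being forests (here the acyclicity is what upgrades ``in-degree $\leq 1$'' from pseudoforest to forest).

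For the direction $m_1(G)\leq k\Rightarrow$ orientation I would argue as follows. By Theorem~\ref{thm:nash-williams}, since $m_1(G)\leq k$, the edge set of $G$ can be partitioned into forests $F_1,\ldots,F_k$. In each $F_i$ I choose a root in every connected component and direct every edge of $F_i$ away from the root of its component. Then $F_i$ is acyclic, every vertex has in-degree $0$ or $1$ in $F_i$, and hence in-degree at most $k$ in the superposition of these $k$ orientations, which is the desired orientation of $G$.

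For the converse, suppose $G$ carries such an orientation, witnessed by forests $F_1,\ldots,F_k$ with $E(G)=F_1\cup\ldots\cup F_k$. It suffices to check $e(H)\leq k\,(v(H)-1)$ for every subgraph $H\seq G$ with $v(H)\geq 2$: for each $i$ the edges of $F_i$ lying in $H$ span a forest on at most $v(H)$ vertices, hence number at most $v(H)-1$, and summing over $i$ gives the claim; maximizing over $H$ yields $m_1(G)\leq k$. (Equivalently, working with the combined digraph directly: its restriction to $V(H)$ is acyclic, so $H$ has a vertex of in-degree $0$ within $H$ and the remaining $v(H)-1$ vertices each receive at most $k$ edges.) All the actual content sits in the first direction, which uses the non-trivial (forest-decomposition) half of Nash--Williams, while the converse is a one-line count using only the trivial half; the sole point to keep straight is that ``acyclically'' here refers to the superimposed orientation being a union of $k$ acyclic branchings, which is precisely the form in which the lemma is applied in the proof of Lemma~\ref{lem:gstar-sparse} --- there one builds such an orientation of $G^*$ layer by layer and reads off $m(G^*)\leq m_1(G^*)\leq k$ (in fact $m(G^*)<k$, since $m_1(G^*)\leq k$ forces $e(H)\leq k(v(H)-1)<k\,v(H)$ for every subgraph $H$).
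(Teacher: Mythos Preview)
The paper gives no detailed argument here; it merely says the lemma ``follows immediately from Theorem~\ref{thm:nash-williams}'' and was used in similar form in~\cite{MR1194728}. Your proof of the direction ``acyclic orientation with all in-degrees $\leq k$ implies $m_1(G)\leq k$'' is correct, and this is the only direction the paper actually invokes (in the proof of Lemma~\ref{lem:gstar-sparse}).

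The other direction, however, has a real gap. You decompose $G$ into forests $F_1,\ldots,F_k$ via Nash--Williams, orient each $F_i$ away from an arbitrarily chosen root in every component, and then call the superposition ``the desired orientation of $G$''. You check that each $F_i$ is individually acyclic and that the total in-degree is at most $k$, but you never verify that the \emph{combined} orientation is a DAG---and in general it is not. Take $G=K_4$: here $m_1(K_4)=6/3=2$, yet in every acyclic orientation of $K_4$ the unique sink has in-degree $3$, so no acyclic orientation with maximum in-degree at most $2$ exists at all. Under the literal DAG reading the forward implication is therefore simply false (it would amount to $k$-degeneracy, which is strictly stronger than $m_1(G)\leq k$). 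Your opening paragraph already hints at the way out---read the conclusion as ``the orientation decomposes into $k$ branchings'' rather than ``the single digraph is acyclic''---but your formal argument then silently mixes the two readings. Fortunately none of this touches the application in Lemma~\ref{lem:gstar-sparse}, where the layered orientation of $G^*$ really is globally acyclic and only the easy (and true) direction is used.
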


\begin{proof}[Proof of Lemma~\ref{lem:gstar-sparse}]
Note that we can orient the edges of $G^*$ acyclically such that the in-degree at each vertex is exactly $k$. Such an orientation can be found for each of the graphs $G_1\seq \cdots\seq G_\ellhalf=G^*$ from Definition~\ref{eq:def-gstar} by orienting all edges in $G_1$ away from the set $[n_1]$, and for each $i=1,\ldots,\ellhalf-1$ by orienting all edges in which $G_{i+1}$ and $G_i$ differ away from the vertices already present in $G_i$ (in Figure~\ref{fig:gstar}, this corresponds to orienting all edges from the left to the right). By Lemma~\ref{lem:orientation} we therefore have $m_1(G^*)\leq k$. Using the definitions in \eqref{eq:m-density} and \eqref{eq:m1-density} it follows that $m(G^*)<k$, as claimed.
\end{proof}

\begin{lemma}[$G^*$ is Ramsey]
\label{lem:gstar-Ramsey}
Let $\ell\geq 3$ and $r\geq 2$ be integers. For
\begin{equation} \label{eq:k}
  k:=\left\lceil \big(1-{\textstyle \frac{1}{\ellhalf}}\big)\cdot r+{\textstyle \frac{1}{\ellhalf}} \right\rceil
\end{equation}
the graph $G^*=G^*(\ell,k,r)$ from Definition~\ref{eq:def-gstar} is $(P_\ell,r)$-Ramsey.
\end{lemma}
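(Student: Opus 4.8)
The plan is to argue by contradiction: fix an $r$-coloring of the edges of $G^*=G^*(\ell,k,r)$, with $k$ as in~\eqref{eq:k}, that has no monochromatic copy of $P_\ell$, and produce one anyway. The idea is to descend through the $\ellhalf$ nested levels $G_1\seq\cdots\seq G_\ellhalf=G^*$ of Definition~\ref{eq:def-gstar}, applying Lemma~\ref{lem:star-subgraph} at each level to peel off a star-colored complete bipartite subgraph, and then to keep track of which colors manage to survive all the way down. The value of $k$ is tuned precisely so that this bookkeeping closes up, as explained at the end.

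First I would apply Lemma~\ref{lem:star-subgraph} to $G_1=G(n_1,k,m_1)$ (legitimate since $n_1=n(\ell,k,r)$, $m_1=m(s_1,k,r)$, and the restricted coloring still has no monochromatic $P_\ell$), obtaining a $k$-set $A_1\seq[n_1]$ and a set $B_1\seq M_1(A_1)$ with $|B_1|=s_1=(k+1)n_2$ such that $G_1[A_1\cup B_1]$ carries an $A_1$-centered star coloring; its color-$c$ class is the star centered at some $a_c\in A_1$, so \emph{every} vertex of $B_1$ sends a color-$c$ edge to $a_c$, for each $c$ in the $k$-element color set $S_1$ of this coloring. Next I split $B_1$ into $k+1$ blocks $B_{1,1},\dots,B_{1,k+1}$ of size $n_2$. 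By the gluing in Definition~\ref{eq:def-gstar}, $G^*$ contains on each $B_{1,j}$ an induced copy of $G(n_2,k,m_2)$, and more generally each set produced at level $i-1$ along branch $j$ supports an induced copy of $G(n_i,k,m_i)$; so I iterate for $i=2,\dots,\ellhalf$, applying Lemma~\ref{lem:star-subgraph} inside the copy of $G(n_i,k,m_i)$ on $B_{i-1,j}$ to get a $k$-set $A_{i,j}\seq B_{i-1,j}$ and a set $B_{i,j}\seq M_i(A_{i,j})$ with $|B_{i,j}|=s_i$, carrying an $A_{i,j}$-centered star coloring whose $k$ colors form a set $S_{i,j}\seq[r]$. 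Since $s_\ellhalf=1$, each $B_{\ellhalf,j}=\{b_j^*\}$ is a single vertex, and the copies over distinct branches $j$ are vertex-disjoint except for their common ancestors.

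The heart of the proof is a counting step. For each $j$ set $T_j:=S_1\cap\bigcap_{i=2}^{\ellhalf}S_{i,j}$, the set of colors appearing in \emph{every} star coloring along branch $j$. Each of the $\ellhalf$ sets $S_1,S_{2,j},\dots,S_{\ellhalf,j}$ omits exactly $r-k$ of the $r$ colors, so $T_j$ omits at most $\ellhalf(r-k)$ of them; the choice $k=\lceil(1-1/\ellhalf)r+1/\ellhalf\rceil$ forces $\ellhalf(r-k)\leq r-1$, hence $T_j\neq\emptyset$. Moreover $T_j\seq S_1$ with $|S_1|=k$, so the $k+1$ nonempty sets $T_1,\dots,T_{k+1}$ cannot be pairwise disjoint (their union would have $\geq k+1>k$ elements); hence there are $j\neq j'$ and a color $c$ with $c\in T_j\cap T_{j'}$.

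It remains to assemble a long monochromatic path. Along branch $j$, since $c$ lies in every $S_{i,j}$ and in $S_1$, let $u_{i,j}\in A_{i,j}$ be the center of the color-$c$ star at level $(i,j)$ and put $u_{1,j}:=a_c$; because $A_{i,j}\seq B_{i-1,j}$ lies on the leaf side of the level-$(i-1,j)$ star coloring and $b_j^*\in B_{\ellhalf,j}$ on the leaf side of level $\ellhalf$, the sequence $b_j^*,u_{\ellhalf,j},u_{\ellhalf-1,j},\dots,u_{2,j},a_c$ is a path of length $\ellhalf$ with all edges of color $c$. Doing the same along branch $j'$ and concatenating the two at their common endpoint $a_c$ yields a path of length $2\ellhalf\geq\ell$ with all edges of color $c$, which is simple since $B_{1,j}\cap B_{1,j'}=\emptyset$ and the copies glued over the two blocks are vertex-disjoint; so $G^*$ contains a monochromatic $P_\ell$, a contradiction. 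The main obstacle I anticipate is not any single deduction but the bookkeeping — locating the nested copies of $G(n_i,k,m_i)$ produced by Definition~\ref{eq:def-gstar}, checking that the parameters $s_i$, $m_i$ match the hypotheses of Lemma~\ref{lem:star-subgraph} at each level, and verifying the disjointness that makes the concatenated path simple — together with the calibration of $k$ in~\eqref{eq:k}, which must be large enough that a common color survives all $\ellhalf$ levels of a branch yet small enough that $k+1$ branches (hence two sharing a surviving color) are available.
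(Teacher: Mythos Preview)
Your proof is correct and follows essentially the same approach as the paper's: both descend through the $\ellhalf$ levels of $G^*$ applying Lemma~\ref{lem:star-subgraph} along $k+1$ parallel branches, use the calibration $r-(r-k)\ellhalf\geq 1$ to guarantee a surviving color on each branch, pigeonhole over the $k$ colors of the first level to find two branches sharing a color, and concatenate the two resulting length-$\ellhalf$ monochromatic paths at their common root $a_c$. The only cosmetic difference is that the paper records the surviving colors via $0/1$ matrices $C^j$ and looks for an all-one column, whereas you work directly with the color sets $T_j=S_1\cap\bigcap_i S_{i,j}$; the two formulations are equivalent.
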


\begin{proof}
For the reader's convenience, the notations used in the proof are illustrated in Figure~\ref{fig:gstar}.

We fix an $r$-coloring of the edges of $G^*$. Let $G_1\seq \cdots\seq G_\ellhalf=G^*$ and the subsets of vertices $M_i()$ and $T_i$, $i=1,\ldots,\ellhalf$, be as in Definition~\ref{eq:def-gstar}.

If the subgraph $G_1=G(n_1,k,m_1)$ of $G^*$ contains a monochromatic copy of $P_\ell$, we are done. Otherwise we apply Lemma~\ref{lem:star-subgraph} to this graph and obtain a $k$-element subset $A_1\seq [n_1]$ and an $s_1$-element subset $B_1\seq M_1(A_1)$ such that $G^*[A_1\cup B_1]$ is an $A_1$-star colored graph.
We denote the vertices in the set $A_1$ by $a_1,\ldots,a_k$ and we assume \wolog that all the edges in $G^*[A_1\cup B_1]$ incident to $a_s$ have color $s$ for $s=1,\ldots,k$.
We arbitrarily partition the set $B_1$ into $k+1$ sets $B_{1,1},\ldots,B_{1,k+1}$, each of size $n_2$ (recall the definition of $s_1$).

For $i=1,\ldots,\ellhalf-1$ and $j=1,\ldots,k+1$ we consider the subgraph of $G_{i+1}$ (and $G^*$) induced by the vertices in the sets $B_{i,j}$ and $\bigcup_{A\in \binom{B_{i,j}}{k}} M_{i+1}(A)$, which is clearly a copy of $G(n_{i+1},k,m_{i+1})$ ($B_{i,j}$ has size $n_{i+1}$). If this graph contains a monochromatic copy of $P_\ell$, we are done. Otherwise we apply Lemma~\ref{lem:star-subgraph} to this graph and obtain a $k$-element subset $A_{i+1,j}\seq B_{i,j}$ and an $s_{i+1}$-element subset $B_{i+1,j}\seq M_{i+1}(A_{i+1,j})$ (note here that $s_{i+1}=n_{i+2}$ for $i\leq \ellhalf-2$) such that $G^*[A_{i+1,j}\cup B_{i+1,j}]$ is an $A_{i+1,j}$-star colored graph.

For $j=1,\ldots,k+1$ we define a matrix $C^j\in\{0,1\}^{\ellhalf\times r}$ which encodes information about the colors of the edges of the subgraphs $G^*[A_1\cup B_{1,j}]$ and $G^*[A_{i,j}\cup B_{i,j}]$, $i=2,\ldots,\ellhalf$, as follows: We define $C^j_{i,s}:=1$ if and only if the color $s$ appears on the edges of the subgraph $G^*[A_{i,j}\cup B_{i,j}]$. Note that each row of $C^j$ contains exactly $k$ entries equal to 1, and that in the first row the first $k$ entries are equal to 1.

Observe that for each color $s=1,\ldots,k$, if the first $t$ entries of the $s$-th column of one of the matrices $C^j$ are all equal to 1, then the graph $G^*$ contains a path of length $t$ in color $s$ that starts at $a_s\in A_1$ and contains a vertex in each of the sets $A_{2,j},\ldots,A_{t+1,j}$ (in Figure~\ref{fig:gstar}, such a path goes from the left to the right).

The choice of $k$ in \eqref{eq:k} ensures that $r-(r-k)\ellhalf\geq 1$, implying that for each of the matrices $C^j$, $j=1,\ldots,k+1$, in one of the first $k$ columns all $\ellhalf$ entries are equal to 1. By the pigeonhole principle these all-one columns are the same for two of these matrices, implying that $G^*$ contains a monochromatic path of length $2\ellhalf\geq \ell$, as claimed.
\end{proof}

\begin{proof}[Proof of Theorem~\ref{thm:paths}: upper bound]
The proof follows immediately by combining Lemma~\ref{lem:gstar-sparse} and Lemma~\ref{lem:gstar-Ramsey}.
\end{proof}

\section{Concluding remarks and open questions}
\label{sec:remarks}

\begin{itemize}
\item Even though the results presented in this paper shed some light on the behavior of the Ramsey density for various interesting graph classes, some other graph classes are still very poorly understood. In particular, it would be very interesting to derive tight bounds for the Ramsey density of non-bipartite graphs $F$, specifically for odd cycles (cf.~Theorem~\ref{thm:cycles} and the first part of Lemma~\ref{lemma:general-lb}).

\item
For complete bipartite graphs $F=K_{a,b}$, $a\leq b$, a slightly better upper bound than the one stated in Theorem~\ref{thm:bicliques} can be derived from the results in~\cite{MR2285457} (see also \cite{kurek-thesis}): The authors show that the graph $K_{p,q}$ with $p:=r(a-1)+1$ and $q:=r(b-1)\binom{r(a-1)+1}{a}+1$ is $(K_{a,b},r)$-Ramsey. It follows that
\begin{equation} \label{eq:ub-Kab}
  m^*(K_{a,b},r) \leq m(K_{p,q})=\frac{pq}{p+q}=p-\frac{p^2}{p+q} \enspace,
\end{equation}
while Theorem~\ref{thm:bicliques} only yields an upper bound of $m^*(K_{a,b},r)< r(a-1)+1=p$. The difference $\frac{p^2}{p+q}$ between the two bounds is always less than 1, however.
For the special case $K_{2,2}=C_4$ and $r=2$ the best bounds we know are
\begin{equation*}
  \frac{11}{6} \leq m^*(C_4,2) \leq \frac{21}{10} \enspace,
\end{equation*}
where the upper bound follows from~\eqref{eq:ub-Kab} and the lower bound from~\eqref{eq:mstar-bicliques} (or alternatively, from \eqref{eq:mstar-even-cycles}) using that $R(K_{2,2},2)=R(C_4,2)=6$~\cite{MR0332559}.

\TM{Können wir zumindest für kleine Kreise (z.B.\ $C_4$ oder $C_5$, $r=2,3$) noch etwas mehr sagen (siehe Randbemerkung vorn zu $m^*(C_5,3)$)?}

\TM{Was können wir für den Fall $F=K_4^-$ sagen (der Diamond Graph), oder allgemeiner, für den Fall $K_\ell$ minus eine Kante? Können wir irgendetwas z.B.\ für den Fall $F=C_4$ daraus lernen?}

\item For any integer $d\geq 2$ and any graph $G$, define $a_d(G)$ as the minimum number of forests into which we can partition the edges of $G$ such that the components (=trees) of every forest in the partition have diameter at most $d$. For $d=\infty$, this is the well-known \emph{arboricity of $G$} \cite{MR0161333}, and for $d=2$ this is the so-called \emph{star arboricity} \cite{MR1001381, MR1194728}.
By Nash-Williams' theorem (Theorem~\ref{thm:nash-williams}), we have $a_\infty(G)=\lceil m_1(G)\rceil$ for any graph $G$. It is also not hard to see that
\begin{equation} \label{eq:ad-ub}
  a_d(G)\leq 2\cdot a_\infty(G)
\end{equation}
for any $d$ and any $G$. In our proof of the upper bound for $m^*(P_3,r)$ stated in Theorem~\ref{thm:paths} we exploited the fact that for $d=2$ and any integer $k\geq 1$ there is a graph $G$ satisfying
\begin{equation*}
  a_\infty(G)=k \quad \text{and} \quad a_2(G)=2k
\end{equation*}
(i.e., for these graphs the inequality in~\eqref{eq:ad-ub} is tight). Such graphs were first constructed in~\cite{MR1194728} and \cite{MR1206262}.
Our proofs show more generally that for any integers $d\geq 2$ and $k\geq 1$ there is a graph $G$ satisfying
\begin{equation} \label{eq:d-arboricity}
  a_\infty(G)=k
  \quad \text{and} \quad 
  a_d(G)=\left\lfloor \big(1+{\textstyle \frac{1}{\dhalf}}\big)\cdot k+1-{\textstyle \frac{1}{\dhalf}} \right\rfloor \enspace.
\end{equation}
Note that for $d=2$ and $d=3$ the right equation in \eqref{eq:d-arboricity} evaluates to $a_d(G)=2k$.
Can one find for some $d\geq 4$ and any $k\geq 1$ a graph $G$ satisfying $a_\infty(G)=k$ and $a_d(G)=2k$? If one could construct such graphs, which seems an interesting problem in its own right, then this would give an almost matching upper bound of $m^*(P_{d+1},r)\leq \big\lceil\frac{r+1}{2}\big\rceil$ (as we proved for $P_3$ and $P_4$, cf.~\eqref{eq:mstar-paths}).
On the other hand, if one could show that for some $d\geq 4$ and some large enough $k\geq 1$, any graph $G$ with $a_\infty(G)=k$ satisfies $a_d(G)\leq c\cdot k$ with a constant $c<2$, which again seems a challenging problem in itself, then this would immediately improve the lower bound on $m^*(P_{d+1},r)$ stated in~\eqref{eq:mstar-paths}.
\end{itemize}

\bibliographystyle{alpha}
\bibliography{refs}

\end{document}